\theoremstyle{plain}
\newtheorem{thm}{Theorem}[section]
\newtheorem{lem}[thm]{Lemma}
\theoremstyle{definition}
\newtheorem{defn}[thm]{Definition}
\newtheorem{rem}[thm]{Remark}
\theoremstyle{remark}
\numberwithin{equation}{section}
\newcommand{\beast}{\begin{eqnarray*}}
\newcommand{\eeast}{\end{eqnarray*}}
\title{$3$-bounded Property in a Triangle-free Distance-regular Graph\thanks{Research partially supported
by the NSC grant 95-2115-M-009-002 of Taiwan R.O.C..}}
\author{Yeh-jong Pan\footnote{Department of Applied Mathematics National Chiao Tung University 1001 Ta Hsueh
Road Hsinchu, Taiwan 300, R.O.C..} \and Chih-wen
Weng\footnote{Department of Applied Mathematics, National Chiao
Tung University, Taiwan R.O.C..}}
\begin{document}
\maketitle

\bibliographystyle{plain}



\bigskip

\begin{abstract}
Let $\Gamma$ denote a distance-regular graph with classical
parameters $(D, b, \alpha, \beta)$ and $D\geq 3$. Assume the
intersection numbers $a_1=0$ and $a_2\not=0$. We show $\Gamma$ is
$3$-bounded in the sense of the article [$D$-bounded
distance-regular graphs, European Journal of Combinatorics(1997)18,
211-229].
\bigskip

{\noindent\bf Keywords:} Distance-regular graph, $Q$-polynomial,
classical parameter, parallelogram,  $3$-bounded.
\end{abstract}


\section {Introduction}\label{s1}

Let $\Gamma=(X,R)$ be a distance-regular graph with diameter
$D\geq3$ and distance function $\partial$. Recall that a sequence
$x$, $y$, $z$ of vertices of $\Gamma$ is {\it geodetic} \rm whenever
\begin{equation*}
  \partial (x,y) + \partial (y,z)=\partial (x,z).
\end{equation*}
A sequence $x$, $y$, $z$ of vertices of $\Gamma$ is {\it
weak-geodetic} whenever
\begin{equation*}
  \partial (x,y)+\partial (y,z)\leq \partial (x,z)+1.
\end{equation*}

\begin{defn}
A subset $\Omega \subseteq X$ is {\it weak-geodetically closed} if
for any weak-geodetic sequence $x$, $y$, $z$ of $\Gamma$,
$$x,\,z\in \Omega \Longrightarrow y\in \Omega.$$
\end{defn}

Weak-geodetically closed subgraphs are called {\it strongly closed
subgraphs} in \cite{s:95}. We refer the reader to \cite{sy:80,
bw:83, Iva:89, s:96, w:98, h:99} for the information of
weak-geodetically closed subgraphs.

\begin{defn}\label{def1.2}\noindent $\Gamma$ is said to be {\it i-bounded} whenever for
all $x,y\in X$ with $\partial(x,y)\leq i$, there is a regular
weak-geodetically closed subgraph of diameter $\partial(x,y)$ which
contains $x,y$.
\end{defn}
\medskip

The properties of $D$-bounded distance-regular graphs were studied
in \cite{w:97}, and these properties were used in the
classification of classical distance-regular graphs of negative
type \cite{w:99}. Before mention of our main result we mention one
more definition.
\medskip

By a {\it parallelogram of length $i$}, we mean a $4$-tuple $xyzw$
consisting of vertices of $\Gamma$ such that
$\partial(x,y)=\partial(z,w)=1$, $\partial(x,z)=i$, and
$\partial(x,w)=\partial(y,w)=\partial(y,z)=i-1$.
\medskip

It was proved that if $a_1=0$, $a_2\neq0$ and $\Gamma$ contains no
parallelograms of length $3$, then $\Gamma$ is $2$-bounded
\cite[Proposition~6.7]{w:98}, \cite[Theorem~1.1]{s:96}.  The
following theorem is our main result.
\medskip

\begin{thm}\label{main} Let $\Gamma$ denote a distance-regular graph
with classical parameters $(D, b, \alpha, \beta)$ and $D\geq 3$.
Assume the intersection numbers $a_1=0$ and $a_2\not=0$. Then
$\Gamma$ is $3$-bounded.
\end{thm}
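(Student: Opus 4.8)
The plan is to prove $3$-boundedness by bootstrapping from the known $2$-bounded result. Since we already have $2$-boundedness from \cite[Proposition~6.7]{w:98}, the essential new content is to produce, for every pair $x,y$ with $\partial(x,y)=3$, a regular weak-geodetically closed subgraph of diameter $3$ containing both. The classical-parameters hypothesis pins down the intersection numbers as explicit rational functions of $(D,b,\alpha,\beta)$, and the condition $a_1=0$ forces $\Gamma$ to be triangle-free, which tightly constrains the local structure; the condition $a_2\neq 0$ is what guarantees enough ``room'' (nonexistence of degenerate collapses) to build the larger subgraph. I would first record the closed forms for $c_i,b_i,a_i$ and note the $Q$-polynomial/parallelogram machinery available under these hypotheses.

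The key technical step I expect to need is a parallelogram-freeness statement at length $3$: I would aim to show that under $a_1=0$, $a_2\neq 0$, and classical parameters, $\Gamma$ contains no parallelograms of length $3$ (or of any relevant length), so that the hypothesis of the $2$-bounded theorem is automatically satisfied and, more importantly, so that the structural rigidity needed to extend weak-geodetically closed subgraphs by one diameter unit is in force. The standard approach is to assume a parallelogram exists and derive a contradiction by counting common neighbors and invoking the $Q$-polynomial inequalities (Terwilliger's balanced-set or the known bounds on $c_2,a_2$ in terms of $b,\alpha$); the classical-parameter formulas let one convert the combinatorial constraint into an algebraic one that fails unless a degeneracy like $a_2=0$ holds. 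This parallelogram-freeness is where I expect the main difficulty, since it requires the most delicate interplay between the eigenvalue structure and the forbidden configuration.

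With parallelogram-freeness in hand, the construction of the diameter-$3$ subgraph would proceed inductively using the tools of \cite{w:97}. Given $x,y$ with $\partial(x,y)=3$, I would take the diameter-$2$ weak-geodetically closed subgraph $\Delta$ containing $x$ and some vertex at distance $2$ from $x$ lying on a geodesic to $y$, and then enlarge $\Delta$ by adjoining the vertices forced in by weak-geodesics through $y$. The parallelogram-free condition ensures that the set so obtained is genuinely weak-geodetically closed of diameter $3$, rather than spilling over into a larger subgraph; the classical-parameter regularity then gives that the resulting subgraph is regular with the correct intersection numbers. The verification of regularity and of the exact diameter amounts to checking that the intersection numbers of the subgraph match a truncation of those of $\Gamma$, a computation that the classical-parameter formulas make routine once the closure property is established.

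The anticipated main obstacle is thus the parallelogram-freeness argument at length $3$, together with ensuring that no collapse to smaller diameter occurs (the role of $a_2\neq 0$). A secondary subtlety is confirming that the candidate vertex set is closed under \emph{all} weak-geodetic triples, not merely the obvious geodesics; I would handle this by a careful case analysis on the distances among $x$, $y$, and an offending vertex $z$, repeatedly using $a_1=0$ to eliminate triangles and the classical-parameter relations to close off the remaining cases.
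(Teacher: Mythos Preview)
Your proposal misidentifies where the difficulty lies and is missing the central technical idea.

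First, parallelogram-freeness is \emph{not} the hard part here: under the hypotheses $a_1=0$, $a_2\neq 0$, and classical parameters, the fact that $\Gamma$ contains no parallelograms of any length is a cited prerequisite (Theorem~\ref{N3.3}, from \cite{plw:06}), not something to be reproved. So your ``anticipated main obstacle'' is already dispatched before the real argument begins.

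Second, your construction of the diameter-$3$ subgraph is too vague to work. Starting from a diameter-$2$ weak-geodetically closed subgraph and ``enlarging by adjoining vertices forced in by weak-geodesics through $y$'' does not, by itself, produce a set that is either weak-geodetically closed or regular; you need a concrete candidate and a way to control it. The paper instead defines $C=\{z\in\Gamma_3(x)\mid B(x,y)=B(x,z)\}$ and $\Delta=[x,C]$, and the whole argument is about showing this specific $\Delta$ works. The place where the classical-parameter hypothesis is genuinely used is Lemma~\ref{N4.5}: given a pentagon $stuzw$ with $s,u\in\Gamma_3(x)$ and $z\in\Gamma_2(x)$, one must show $B(x,s)=B(x,u)$. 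This is done by applying Terwilliger's $Q$-polynomial balanced-set relation (Theorem~\ref{N3.1}) and taking an inner product with $E\hat{s}$, then simplifying via the dual-eigenvalue formula $\theta_i^*-\theta_0^*=(\theta_1^*-\theta_0^*){i\brack 1}b^{1-i}$ specific to classical parameters. This eigenvalue computation is the heart of the proof, and nothing in your plan corresponds to it.

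Finally, the regularity of $\Delta$ with valency $a_3+c_3$ is not a ``routine'' consequence of classical-parameter formulas. It is established by a two-way counting argument along a geodesic $x=x_0,x_1,x_2,x_3$: one counts pairs $(s,z)$ with $s\in\Gamma_1(x_i)\setminus\Delta$, $z\in\Gamma_1(x_{i+1})\setminus\Delta$, $\partial(s,z)=2$, and shows the count is exactly $|\Gamma_1(x_i)\setminus\Delta|\,a_2$ in one direction and at most $|\Gamma_1(x_{i+1})\setminus\Delta|\,a_2$ in the other, giving a monotone chain that forces equality since the endpoints agree. The case analysis showing $z\notin\Delta$ again leans on Lemma~\ref{N4.5} and on Theorem~\ref{N2.5}. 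Your outline does not anticipate any of this machinery.
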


Note that if $\Gamma$ has classical parameters $(D, b, \alpha,
\beta)$ with  $D\geq 3$, $a_1=0$ and $a_2\not=0,$  then $\Gamma$
contains no parallelograms of any length. See
\cite[Theorem~1.1]{plw:06} or Theorem~\ref{N3.3} in this article.

\section{Preliminaries}\label{s2}

  In this section we review some definitions, basic concepts and some previous results concerning distance-regular
graphs. See Bannai and Ito \cite{BanIto} or Terwilliger \cite{t:95}
for more background information.
\medskip

  Let $\Gamma$=($X$, $R$) denote a finite undirected, connected graph without
loops or multiple edges with vertex set $X$, edge set $R$, distance
function $\partial$, and diameter $D$:={\rm max}$\{\,\partial
(x,y)\mid x,y\in X\}$. By a {\it pentagon}, we mean a $5$-tuple
$x_{1}x_{2}x_{3}x_{4}x_{5}$ consisting of vertices in $\Gamma$ such
that $\partial(x_{i},x_{i+1})=1$ for $1\leq i\leq 4$ and
$\partial(x_{5},x_{1})=1$.
\bigskip

  For a vertex $x\in X$ and an integer $0\leq i\leq D$, set $\Gamma_{i}(x):=\{\,z\in X\mid\,\partial(x, z)=i\}$.
The {\it valency} $k(x)$ of a vertices $x\in X$ is the cardinality
of $\Gamma_{1}(x)$. The graph $\Gamma$ is called {\it regular} (with
{\it valency} $k$) if each vertex in $X$ has valency $k$.

 A graph $\Gamma$ is said to be {\it distance-regular} whenever for all integers $0 \leq h, i,
j \leq D$, and all vertices $x,y \in X$ with $\partial(x,y)=h$, the
number $$p^h_{ij}=\vert \{\,z\in X \mid\,z\in \Gamma_{i}(x)\cap
\Gamma_{j}(y)\}\vert$$ is independent of $x, y$. The constants
$p^h_{ij}$ are known as the {\it intersection numbers} of $\Gamma$.

Let $\Gamma$=($X$, $R$) be a  distance-regular graph. For two
vertices $x,y\in X$, with $\partial(x,y)=i$, set
\begin{eqnarray*}
  B(x,y)&:=&\Gamma_{1}(x)\cap\Gamma_{i+1}(y),\\
  C(x,y)&:=&\Gamma_{1}(x)\cap\Gamma_{i-1}(y),\\
  A(x,y)&:=&\Gamma_{1}(x)\cap\Gamma_{i}(y).
\end{eqnarray*}
Note that
  \begin{eqnarray*}
   \nonumber \vert B(x,y)\vert &=& p^i_{1\ i+1},\\
   \nonumber \vert C(x,y)\vert &=& p^i_{1\ i-1},\\
   \nonumber \vert A(x,y)\vert &=& p^i_{1\ i}
  \end{eqnarray*}
are independent of $x$, $y$.

For convenience, set $c_i:=p^i_{1\ i-1}$  for $1 \leq i \leq D$,
$a_i:=p^i_{1\ i}$ for $0 \leq i \leq D$, $b_i:=p^i_{1\ i+1}$ for $0
\leq i \leq D-1$ and put $b_D:=0$, $c_0:=0$, $k:=b_0$. Note that $k$
is the valency of $\Gamma$. It is immediate from the definition of
$p^h_{ij}$ that $b_i \neq 0$ for $0\leq i \leq D-1$ and $c_i \neq 0$
for $1\leq i \leq D$. Moreover
\begin{equation}\label{eq2.1}
k=a_i+b_i+c_i \hskip 0.2in {\rm for}~~0\leq i\leq D.
\end{equation}
\medskip

From now on we assume $\Gamma=(X,R)$ is distance-regular with
diameter $D\geq 3$. Recall that a sequence $x$, $y$, $z$ of vertices
of $\Gamma$ is weak-geodetic whenever
\begin{equation*}
  \partial (x,y)+\partial (y,z)\leq \partial (x,z)+1.
\end{equation*}

\begin{defn}\label{N2.1}
Let $\Omega$ be a subset of $X$, and pick any vertex $x\in\Omega$.
$\Omega$ is said to be {\it weak-geodetically closed with respect to
$x$}, whenever for all $z\in\Omega$ and for all $y\in X$,
\begin{equation}\label{eq2.2}
    x, y, z~\text{\rm are~weak-geodetic} ~~ \Longrightarrow ~~  y\in\Omega.
\end{equation}
\end{defn}

Note that $\Omega$ is weak-geodetically closed with respect to a
vertex $x\in \Omega$ if and only if
$$C(z,x)\subseteq\Omega~~ {\rm and}~~ A(z,x)\subseteq\Omega\qquad {\rm for~
all}~~ z\in\Omega$$ \cite[Lemma~2.3]{w:98}. Also $\Omega$ is
weak-geodetically closed if and only if for any vertex $x\in\Omega$,
$\Omega$ is weak-geodetically closed with respect to $x$. We list a
few results which will be used later in this paper.

\medskip

\begin{thm}\label{N2.2}(\cite[Theorem~4.6]{w:98})
Let $\Gamma$ be a distance-regular graph with diameter $D\geq3$. Let
$\Omega$ be a regular subgraph of $\Gamma$ with valency $\gamma$ and
set $d:={\rm min}\{i\mid \gamma\leq c_{i}+a_{i}\}$. Then the
following (i),(ii) are equivalent.
\begin{enumerate}
  \item [(i)]$\Omega$ is weak-geodetically closed with respect to at least one vertex $x\in\Omega$.
  \item [(ii)] $\Omega$ is weak-geodetically closed with diameter $d$.
\end{enumerate}
In this case $\gamma=c_d+a_d$.
\end{thm}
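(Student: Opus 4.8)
The plan is to prove the equivalence by disposing of the trivial direction first and then concentrating on (i)$\Rightarrow$(ii), the essential tool being the characterization recorded just before the statement: $\Omega$ is weak-geodetically closed with respect to $w\in\Omega$ if and only if $C(z,w)\subseteq\Omega$ and $A(z,w)\subseteq\Omega$ for every $z\in\Omega$. Before anything else I would record the monotonicity $c_i+a_i=k-b_i$, which is nondecreasing in $i$ because $b_i$ is nonincreasing (a standard property of distance-regular graphs). Hence $d=\min\{i\mid \gamma\le c_i+a_i\}$ is characterized by
\[
c_i+a_i<\gamma\ \ (i<d),\qquad c_i+a_i\ge\gamma\ \ (i\ge d).
\]
The implication (ii)$\Rightarrow$(i) is then immediate: a subgraph that is weak-geodetically closed is by definition weak-geodetically closed with respect to each of its vertices, in particular with respect to at least one.

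For (i)$\Rightarrow$(ii), fix $x\in\Omega$ with respect to which $\Omega$ is weak-geodetically closed and set $\Omega_i:=\Omega\cap\Gamma_i(x)$. For $z\in\Omega_i$ the neighbours of $z$ lying in $\Omega$ split as $C(z,x)\cup A(z,x)\cup(B(z,x)\cap\Omega)$; since $C(z,x),A(z,x)\subseteq\Omega$ have sizes $c_i,a_i$, regularity of $\Omega$ gives the key count
\[
|B(z,x)\cap\Omega|=\gamma-c_i-a_i .
\]
Let $e$ be the eccentricity of $x$ in $\Omega$. Using $c_i\ge 1$ one checks that $\Omega$ is connected and that every level $\Omega_0,\dots,\Omega_e$ is nonempty (each $z\neq x$ has a neighbour in $C(z,x)\subseteq\Omega$ strictly closer to $x$). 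For $z\in\Omega_e$ one has $B(z,x)\cap\Omega=\emptyset$, so $\gamma=c_e+a_e$ and hence $d\le e$; conversely, for each $i<e$ any $u\in\Omega_{i+1}$ witnesses, via $z\in C(u,x)\subseteq\Omega_i$, that $u\in B(z,x)\cap\Omega$, forcing $\gamma-c_i-a_i\ge 1$ and so $i<d$. Therefore $e=d$ and $\gamma=c_d+a_d$, which proves the valency claim and shows the eccentricity of $x$ equals $d$.

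It remains to upgrade ``closed with respect to $x$'' to ``closed with respect to every vertex'', after which the diameter is the maximum of the (then common) eccentricities, namely $d$. The plan is a propagation argument: show that if $\Omega$ is weak-geodetically closed with respect to $x$ then it is closed with respect to every neighbour $x'\in\Omega$ of $x$, and then iterate along paths in the connected graph $\Omega$ to reach all vertices. Concretely, for $z\in\Omega$ and $v\in C(z,x')\cup A(z,x')$ the only case not already handled by closure with respect to $x$ is $\partial(x,v)=\partial(x,z)+1$ together with $\partial(x',v)\le \partial(x',z)$, and one must show each such $v$ lies in $\Omega$. Reconciling an ``up-step from $x$'' that is simultaneously a ``level-or-down-step from $x'$'', while exploiting the exact count $|B(z,x)\cap\Omega|=\gamma-c_i-a_i$ (which vanishes precisely at level $d$), is the main obstacle and the technical heart of the proof. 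Once this propagation lemma is established, connectivity of $\Omega$ yields closure with respect to all vertices; applying the level computation at each base point shows every vertex has eccentricity $d$, so $\Omega$ is weak-geodetically closed of diameter $d$, completing (i)$\Rightarrow$(ii) and the equivalence.
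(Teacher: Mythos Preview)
The paper does not supply a proof of this theorem; it is quoted verbatim as \cite[Theorem~4.6]{w:98} and invoked as a black box in Section~4. There is therefore nothing in the present paper to compare your argument against.

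On the merits of your outline itself: the portion establishing that the eccentricity of $x$ in $\Omega$ equals $d$ and that $\gamma=c_d+a_d$ is clean and correct. The genuine gap is exactly where you locate it, the propagation step from $x$ to a neighbour $x'$. You reduce to the case $v\in C(z,x')\cup A(z,x')$ with $\partial(x,v)=\partial(x,z)+1$ and then stop, calling it ``the main obstacle and the technical heart of the proof''. That is an accurate assessment, but it means the proof is not complete. Closure with respect to $x$ gives you $C(z,x)\cup A(z,x)\subseteq\Omega$ and the count $|B(z,x)\cap\Omega|=\gamma-c_i-a_i$, yet the problematic $v$ lies in $B(z,x)$, and nothing in that count tells you \emph{which} elements of $B(z,x)$ belong to $\Omega$. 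A bare neighbour-to-neighbour propagation, without further structural input, does not obviously close this; the argument in \cite{w:98} is more elaborate. As written, your proposal is a correct reduction to the hard case, with the hard case left open.
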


\medskip

\begin{lem}\label{N2.3}(\cite[Lemma~2.6]{s:96})
Let $\Gamma$ be a distance-regular graph with diameter $2$, and let
$x$ be a vertex of $\Gamma$. Suppose $a_2\neq 0$. Then the subgraph
induced on $\Gamma_{2}(x)$ is connected of diameter at most 3.
\end{lem}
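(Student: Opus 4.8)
The plan is to work inside the subgraph $\Delta$ induced on $\Gamma_{2}(x)$ and to bound the $\Delta$-distance of an arbitrary pair of its vertices, using that $\Gamma$ has diameter $2$ to cut the case analysis down to essentially one case. First I would record the local degree data: for $u\in\Gamma_{2}(x)$ we have $\partial(u,x)=2$ and $b_{2}=0$, so $\Gamma_{1}(u)$ splits as $A(u,x)\sqcup C(u,x)$ with $|A(u,x)|=a_{2}$ and $|C(u,x)|=c_{2}$, where $A(u,x)=\Gamma_{1}(u)\cap\Gamma_{2}(x)$ is exactly the set of neighbours of $u$ inside $\Delta$. Thus $\Delta$ is $a_{2}$-regular, and the hypothesis $a_{2}\neq 0$ guarantees that $\Delta$ has no isolated vertices (it also excludes the degenerate case in which $\Gamma_{2}(x)$ is an independent set). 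Since $\Gamma$ has diameter $2$, any two distinct vertices $u,v\in\Gamma_{2}(x)$ satisfy $\partial(u,v)\in\{1,2\}$; when $\partial(u,v)=1$ the edge $uv$ already lies in $\Delta$, so the whole problem reduces to showing $\partial_{\Delta}(u,v)\le 3$ for pairs with $\partial(u,v)=2$.

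Fix such a pair $u,v$; they have exactly $c_{2}$ common neighbours in $\Gamma$, each lying in $\Gamma_{1}(x)$ or in $\Gamma_{2}(x)$. If some common neighbour $w$ lies in $\Gamma_{2}(x)$, it yields the path $u,w,v$ inside $\Delta$ and $\partial_{\Delta}(u,v)=2$. Otherwise all $c_{2}$ common neighbours lie in $C(u,x)\cap C(v,x)$; since $|C(u,x)|=|C(v,x)|=c_{2}$, this forces $C(u,x)=C(v,x)$, so $u$ and $v$ have identical neighbourhoods in $\Gamma_{1}(x)$. In this remaining case I would aim for a path of length $3$, namely to find $s\in A(u,x)$ and $t\in A(v,x)$ with $s\sim t$, which gives the path $u,s,t,v$ in $\Delta$. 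A short reduction turns this into a common-neighbour question: for $s\in A(u,x)$ one has $\partial(s,v)=2$ (otherwise $s$ would be a common neighbour of $u,v$ inside $\Gamma_{2}(x)$), so $s$ and $v$ have $c_{2}$ common neighbours, and the desired edge exists precisely when, for some choice of $s$, one of these common neighbours can be taken in $\Gamma_{2}(x)$.

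The hardest step, and the one I expect to be the main obstacle, is excluding the opposite configuration: that for every $s\in A(u,x)$ all $c_{2}$ common neighbours of $s$ and $v$ lie in $\Gamma_{1}(x)$, equivalently that $A(u,x)$ and $A(v,x)$ are disjoint and span no edge of $\Delta$. I would attack it by double counting the edges between $A(u,x)$ and $C(v,x)$: the assumption forces each $s\in A(u,x)$ to be adjacent to all $c_{2}$ vertices of $C(v,x)=C(u,x)$, whence every $w\in C(u,x)$ is adjacent to $u$, to $v$, and to all of $A(u,x)\cup A(v,x)$, giving $w$ at least $2a_{2}+2$ neighbours in $\Gamma_{2}(x)$ and hence $b_{1}\ge 2a_{2}+2$. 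Feeding this rigidity back through $k=a_{2}+c_{2}=a_{1}+b_{1}+1$ and through the common-neighbour counts of pairs drawn from $C(u,x)$ and from $A(u,x)\cup A(v,x)$ pins the local structure down very tightly; converting this tightness into an outright contradiction with the feasibility of a diameter-$2$ distance-regular graph is the technical heart of the argument. Once that configuration is ruled out, the length-$2$ and length-$3$ paths constructed above cover every pair, so $\Delta$ is connected with diameter at most $3$.
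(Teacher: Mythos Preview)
The paper does not supply its own proof of this lemma; it is quoted verbatim from Suzuki \cite[Lemma~2.6]{s:96}. So there is no in-paper argument to compare against, and I will comment only on the soundness of your plan.

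Your reduction is correct: the only nontrivial situation is $\partial(u,v)=2$ with every common neighbour of $u,v$ lying in $\Gamma_1(x)$, which forces $C(u,x)=C(v,x)=:W$, and the absence of a length-$3$ path in $\Delta$ is exactly the statement that $A(u,x)$ and $A(v,x)$ span no edge. You also correctly deduce that in this ``hard case'' each $s\in A(u,x)$ has $C(s,x)=W$, so every $w\in W$ is adjacent to all of $S\cup T$ where $S=\{u\}\cup A(u,x)$ and $T=\{v\}\cup A(v,x)$. However, the inequality $b_1\ge 2a_2+2$ that you extract is not where the contradiction lives, and the appeal to ``feasibility of a diameter-$2$ distance-regular graph'' is left as an unexecuted promise; many admissible parameter sets satisfy $b_1\ge 2a_2+2$, so this line by itself does not close the argument.

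The step you are missing is short once you look at the right pair. Take distinct $s,s'\in A(u,x)$ and suppose $\partial(s,s')=2$. Their common neighbours include $u$ together with every vertex of $W$ (since $C(s,x)=C(s',x)=W$), hence number at least $c_2+1>c_2$, which is impossible. Therefore $S$ is a clique in $\Gamma$. Now for adjacent $u,s\in S$ the set of common neighbours contains both $S\setminus\{u,s\}$ and $W$, disjoint sets of sizes $a_2-1$ and $c_2$, whence $a_1\ge (a_2-1)+c_2=k-1$ and so $b_1=0$, contradicting that $\Gamma$ has diameter~$2$. This rules out the hard case and finishes the proof. Your outline is salvageable, but the decisive common-neighbour count is on a pair \emph{inside} $A(u,x)$, forcing $S$ to be a clique; the bound on $b_1$ coming from a single $w\in W$ is a detour.
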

\medskip

\begin{thm}\label{N2.4}(\cite[Proposition~6.7]{w:98},\cite[Theorem~1.1]{s:96})
Let $\Gamma$ be a distance-regular graph with diameter $D\geq3$.
Suppose $a_1=0$, $a_2\neq0$ and $\Gamma$ contains no parallelograms
of length $3$. Then $\Gamma$ is $2$-bounded.
\end{thm}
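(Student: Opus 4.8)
The plan is to deduce the theorem from the $2$-bounded case together with the characterization in Theorem~\ref{N2.2}. First I would record the reduction: by Theorem~\ref{N3.3} the hypotheses $a_1=0$, $a_2\ne 0$ on a graph with classical parameters force $\Gamma$ to contain no parallelograms of any length, in particular none of length $3$; hence Theorem~\ref{N2.4} applies and $\Gamma$ is $2$-bounded. Since the cases $\partial(x,y)\le 1$ are handled by a single vertex or a single edge and the case $\partial(x,y)=2$ by $2$-boundedness, it remains to produce, for a fixed pair $x,y$ with $\partial(x,y)=3$, a regular weak-geodetically closed subgraph of diameter $3$ containing $x$ and $y$.

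For the construction I would let $\Delta$ be the smallest subset of $X$ containing $x$ and $y$ that is closed under the two operations $z\mapsto C(z,x)$ and $z\mapsto A(z,x)$; that is, the weak-geodetic closure of $\set{x,y}$ with respect to $x$ in the sense of Definition~\ref{N2.1} (equivalently, $C(z,x),A(z,x)\subseteq\Delta$ for all $z\in\Delta$). By construction $\Delta$ is weak-geodetically closed with respect to $x$. Moreover $C(z,x)\subseteq\Gamma_{i-1}(x)$ and $A(z,x)\subseteq\Gamma_{i}(x)$ whenever $z\in\Gamma_i(x)$, while $A(v,x)=\emptyset$ for $v\in\Gamma_1(x)$ because $a_1=0$; hence the closure never leaves $\bigcup_{0\le i\le 3}\Gamma_i(x)$, so every vertex of $\Delta$ lies within distance $3$ of $x$.

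The whole theorem then reduces to a single assertion: $\Delta$ is a regular subgraph. Granting this, write $\gamma$ for its valency. Since $\Delta$ is weak-geodetically closed with respect to $x$, Theorem~\ref{N2.2} shows that $\Delta$ is weak-geodetically closed of diameter $d=\min\set{i\mid \gamma\le c_i+a_i}$ with $\gamma=c_d+a_d$. Because $y\in\Delta\cap\Gamma_3(x)$ we have $d\ge 3$, and because $c_i+a_i=k-b_i$ is strictly increasing in the relevant range (a monotonicity I would verify directly from the classical parameters), the value $\gamma=c_3+a_3$ produced by the count below forces $d=3$. Thus $\Delta$ would be exactly the desired diameter-$3$ subgraph.

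The crux, and the step I expect to be the main obstacle, is therefore the regularity of $\Delta$. For $z\in\Delta$ with $\partial(x,z)=i$ the neighbours of $z$ split as $C(z,x)\cup A(z,x)\cup B(z,x)$, and since $\Delta$ contains $C(z,x)$ and $A(z,x)$ entirely, the degree of $z$ in $\Delta$ equals $c_i+a_i+\abs{B(z,x)\cap\Delta}$. Regularity is thus equivalent to showing that the outward count $\abs{B(z,x)\cap\Delta}$ depends only on $i$ and equals $(c_3+a_3)-(c_i+a_i)$; for $i=3$ this count is automatically $0$, so the real work is at $i=0,1,2$. To control it I would cover the part of $\Delta$ within distance $2$ of $x$ by the diameter-$2$ weak-geodetically closed subgraphs supplied by $2$-boundedness — these are uniquely determined and behave convexly precisely because there are no parallelograms — and count outward neighbours inside them using the classical values of $c_2$ and $a_2$; the absence of parallelograms is exactly what forbids the stray configurations that would make the count jump, while Suzuki's Lemma~\ref{N2.3}, applied to a suitable diameter-$2$ residue, guarantees that the distance-$2$ and distance-$3$ part of $\Delta$ is connected so that no expected outward neighbour is omitted. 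Showing that the family of diameter-$2$ subgraphs through $x$ assembles into a single regular diameter-$3$ object with no leakage is where the no-parallelogram hypothesis and the explicit classical intersection numbers carry essentially all of the weight.
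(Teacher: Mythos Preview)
Your proposal does not address Theorem~\ref{N2.4} at all. Theorem~\ref{N2.4} asserts that $\Gamma$ is $2$-bounded under the stated hypotheses; it is quoted in the paper as a known result from \cite{w:98,s:96} and is not proved here. Your argument, by contrast, \emph{assumes} Theorem~\ref{N2.4} as an input (``hence Theorem~\ref{N2.4} applies and $\Gamma$ is $2$-bounded'') and then goes on to construct a diameter-$3$ weak-geodetically closed subgraph through a pair $x,y$ with $\partial(x,y)=3$. That is a sketch of Theorem~\ref{main}, not of Theorem~\ref{N2.4}; as a proof of Theorem~\ref{N2.4} it is circular.

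Even read as an outline for Theorem~\ref{main}, your plan diverges from the paper and leaves the decisive step unaddressed. The paper does not take $\Delta$ to be the abstract weak-geodetic closure of $\{x,y\}$ with respect to $x$; it defines $\Delta=[x,C]$ for the explicit set $C=\{z\in\Gamma_3(x)\mid B(x,y)=B(x,z)\}$, so that $\abs{\Gamma_1(z)\cap\Delta}=a_3+c_3$ is immediate for $z\in C$. The key technical point---that $C$ is large enough for $\Delta$ to absorb the relevant $A(z,x)$'s---is Lemma~\ref{N4.5}, which shows $B(x,s)=B(x,u)$ for a pentagon $stuzw$ with $s,u\in\Gamma_3(x)$, $z\in\Gamma_2(x)$. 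That lemma is proved by an inner-product computation with the $Q$-polynomial dual eigenvalues (Theorem~\ref{N3.1} and the formula~(\ref{eq3-18})), not by a purely combinatorial no-parallelogram argument. Your sketch invokes ``no parallelograms'' and ``classical intersection numbers'' generically but gives no mechanism for this step, and your proposed $\Delta$ is not visibly the same set as the paper's, so the endpoint counts $\abs{\Gamma_1(x_0)\cap\Delta}=\abs{\Gamma_1(x_3)\cap\Delta}=a_3+c_3$ that anchor the regularity argument are not available to you.
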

\medskip

\begin{thm}\label{N2.5}(\cite[Lemma~6.9]{w:98},\cite[Lemma~4.1]{s:96})
Let $\Gamma$ be a distance-regular graph with diameter $D\geq3$.
Suppose $a_1=0$, $a_2\neq 0$ and $\Gamma$ contains no parallelograms
of any length. Let $x$ be a vertex of $\Gamma$, and let $\Omega$ be
a weak-geodetically closed subgraph of $\Gamma$ with diameter $2$.
Suppose there exists an integer $i$ and a vertex
$u\in\Omega\cap\Gamma_{i-1}(x)$, and suppose
$\Omega\cap\Gamma_{i+1}(x)\neq\emptyset$. Then for all $t\in\Omega$,
we have $\partial(x,t)=i-1+\partial(u,t)$.
\end{thm}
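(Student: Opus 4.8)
The plan is to fix $t\in\Omega$ and, since $\Omega$ has diameter $2$, treat the three possibilities $\partial(u,t)\in\{0,1,2\}$ separately; the case $t=u$ is just the hypothesis. In every case the upper bound $\partial(x,t)\le(i-1)+\partial(u,t)$ is immediate from the triangle inequality $\partial(x,t)\le\partial(x,u)+\partial(u,t)$, so the whole content is the matching lower bound, which I would extract from the interplay of the two hypotheses: that $u$ is close ($\partial(x,u)=i-1$) and that some vertex of $\Omega$ is far.

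First I would use the far vertex to pin down the range of $\partial(x,\cdot)$ on $\Omega$. Fix $w\in\Omega\cap\Gamma_{i+1}(x)$. For every $t\in\Omega$ we have $\partial(t,w)\le 2$ because $\Omega$ has diameter $2$, so $i+1=\partial(x,w)\le\partial(x,t)+\partial(t,w)\le\partial(x,t)+2$, and hence $\partial(x,t)\ge i-1$. Combined with the triangle-inequality upper bound $\partial(x,t)\le(i-1)+2=i+1$, this shows $\partial(x,\cdot)$ takes only the values $i-1,i,i+1$ on $\Omega$, that $u$ is a nearest vertex of $\Omega$ to $x$, that $w$ is a farthest one, and, comparing distances, that $\partial(u,w)=2$.

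Next I would anchor the middle level using weak-geodetic closure. Since $\Omega$ is weak-geodetically closed it is closed with respect to $w$, so $C(u,w)\subseteq\Omega$; every common neighbor $m$ of $u$ and $w$ then satisfies $\partial(x,m)\le i$ (it is adjacent to $u$) and $\partial(x,m)\ge\partial(x,w)-1=i$ (it is adjacent to $w$), whence $\partial(x,m)=i$. Thus $u$ already has $c_2\ge1$ neighbors in $\Omega$ at distance $i$ from $x$. The remaining cases should propagate this: for $\partial(u,t)=1$ the upper bound already forbids $i+1$, so I must only rule out $\partial(x,t)=i-1$; for $\partial(u,t)=2$, choosing a common neighbor $m\in C(u,t)\subseteq\Omega$ which by the previous case lies at level $i$, I must rule out both $\partial(x,t)=i-1$ and $\partial(x,t)=i$.

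The crux, and the step I expect to be the main obstacle, is exactly these exclusions, and this is where the no-parallelogram hypothesis (together with $a_1=0$) must enter. In each forbidden configuration either two vertices of $\Omega$ that are adjacent in $\Gamma$ lie in a common sphere $\Gamma_j(x)$, or a second vertex of $\Omega$ ties $u$ for nearest. I would attempt to derive a contradiction by attaching a geodesic from $x$ to the offending local configuration, anchored against the level-$i$ neighbor produced above or against the far vertex $w$, and reading off a $4$-tuple $pqrs$ with $\partial(p,q)=\partial(r,s)=1$, $\partial(p,r)=\ell$, and $\partial(p,s)=\partial(q,r)=\partial(q,s)=\ell-1$, i.e. a parallelogram of length $\ell$. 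The delicate point is verifying that all three cross-distances come out equal to $\ell-1$ rather than to $\ell$: controlling them forces a careful choice of the geodesic and repeated use of triangle-freeness ($a_1=0$) to preclude the shortcuts that would otherwise spoil the parallelogram. Once it is shown that no edge of $\Omega$ lies inside a single $\Gamma_j(x)$ and that $u$ is the unique nearest vertex of $\Omega$ to $x$, the two cases close and yield $\partial(x,t)=(i-1)+\partial(u,t)$ for all $t\in\Omega$.
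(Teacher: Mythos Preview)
The paper does not give its own proof of this statement: Theorem~\ref{N2.5} is quoted in the Preliminaries with citations to \cite[Lemma~6.9]{w:98} and \cite[Lemma~4.1]{s:96} and is used as a black box thereafter. So there is no in-paper argument to compare against; I can only assess your plan on its own merits.

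Your reduction is sound. Pinning $\partial(x,\cdot)$ to $\{i-1,i,i+1\}$ on $\Omega$, deducing $\partial(u,w)=2$, and reducing everything to the two exclusions (no second vertex at level $i-1$; no level-$i$ vertex at distance $2$ from $u$) is the right shape. One correction: your auxiliary target ``no edge of $\Omega$ lies inside a single $\Gamma_j(x)$'' is false for $j=i+1$. Once the theorem is known, $\Omega\cap\Gamma_{i+1}(x)=\Omega_2(u)$, and since $\Omega$ inherits $a_2\neq 0$ this set carries edges. You only need the statement for $j\in\{i-1,i\}$, so this is a matter of aiming at the correct target, but as written it would have you chasing a false lemma.

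The substantive gap is the one you yourself flag: you have not produced the parallelogram. The naive attempts do not work. In the key configuration $u\in\Gamma_{i-1}(x)$, $m,t\in\Gamma_i(x)$ with $\partial(u,m)=\partial(m,t)=1$, $\partial(u,t)=2$, picking $v\in C(u,x)$ and testing the quadruple $v,u,t,m$ fails the parallelogram conditions for either value of $\partial(v,t)$ (the constraint $\partial(q,s)=\ell-1$ breaks because $\partial(u,m)=1$); pushing further back along the geodesic to $x$ does not repair this directly. The actual arguments in the cited references are more involved than a single four-vertex read-off, and until you can name specific vertices and verify all five distance constraints simultaneously, what you have is a plan rather than a proof. (For orientation: in the base case $i=2$ the exclusions follow without parallelograms, just from weak-geodetic closure, since $u,x,t$ is weak-geodetic whenever $\partial(x,t)\le 2=\partial(u,t)$, forcing $x\in\Omega$ and contradicting the diameter; it is the step to general $i$ that carries the real content.)
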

\bigskip

\section{$Q$-polynomial properties}
 Let $\Gamma=(X,R)$ denote a distance-regular graph with diameter
$D\geq3$. Let $\mathbb{R}$ denote the real number field. Let ${\rm
Mat}_X(\mathbb{R})$ denote the algebra of all the matrices over
$\mathbb{R}$ with the rows and columns indexed by the elements of
$X$. For $0\leq i \leq D$ let $A_i$ denote the matrix in ${\rm
Mat}_X(\mathbb{R})$, defined by the rule
$$
(A_i)_{xy}=\left\{%
\begin{array}{ll}
    1, & \hbox{if $\partial(x, y)=i$;} \\
    0, & \hbox{if $\partial(x, y) \neq i$} \\
\end{array}%
\right.\ \ \ \ \  {\rm for}~~x,y \in X.$$ We call $A_i$ the {\it
distance matrices} of $\Gamma$. We have
\begin{eqnarray*}
 &&A_0  = I,\label{1.2}  \\
 &&A^t_i  =A_i  \ \ \ {\rm for}~0\leq i\leq D~{\rm where}~A^t_i~{\rm means~the~transpose~of}~A_i,\label{1.4}    \\
 &&A_i  A_j ={\sum_{h=0}^D}p^h_{ij}A_h \;\;\;\; {\rm for}\ \ 0\leq i, j\leq D.\label{1.5}    
 \end{eqnarray*}
\medskip

 Let $M$ denote the subspace of ${\rm Mat}_X(\mathbb{R})$
spanned by $A_0,A_1,\ldots,A_D$. Then $M$ is a commutative
subalgebra of ${\rm Mat}_X(\mathbb{R})$, and is known as the {\it
Bose-Mesner algebra } of $\Gamma $. By \cite[p.~59,~64]{bcn}, $M$
has a second basis $E_0, E_1,\ldots,E_D$ such that
\begin{eqnarray}
&&E_0    =\vert X\vert^{-1}J ~~~{\rm where}~J={\rm all} \ 1\text{'s} \ {\rm matrix},\nonumber \\
&&E_iE_j=\delta_{ij}E_i \hskip 0.2in {\rm for}~~0\leq i, j\leq D,\nonumber \\
&&E_0 +E_1+\cdots+E_D=I,\nonumber \\
&&E^t_i = E_i \hskip 0.2in {\rm for}~~0\leq i\leq D.\label{1.10}
\end{eqnarray}
The $E_0,E_1,\ldots,E_D$ are known as the {\it primitive
idempotents} of $\Gamma$, and $E_0$ is known as the {\it trivial}
idempotent. Let $E$ denote any primitive idempotent of $\Gamma$.
Then we have
\begin{equation}\label{1.11}
E=\vert X\vert^{-1}{\sum_{i=0}^D}\theta^*_iA_i
\end{equation}
for some $\theta^*_0,\theta^*_1,\ldots,\theta^*_D \in \mathbb{R}$,
called the {\it dual eigenvalues} associated with $E$.
\bigskip

Set $V=\mathbb{R}^{\vert X \vert}$ (column vectors), and view the
coordinates of $V$ as being indexed by $X$. Then the Bose-Mesner
algebra $M$ acts on $V$ by left multiplication. We call $V$ the {\it
standard module} of $\Gamma$. For each vertex $x \in X$, set
\begin{equation}\label{1.12}
\hat x=(0,0,\ldots,0,1,0,\ldots,0)^t,
\end{equation}
where the $1$ is in coordinate $x$.  Also, let $\langle \, ,\rangle$
denote the dot product
\begin{equation}\label{1.13}\langle u,v \rangle=u^{t}v\ \ \ \ {\rm
for}~~ u,v \in V.\end{equation}

Then referring to the primitive idempotent $E$ in (\ref{1.11}), we
compute from (\ref{1.10})-(\ref{1.13}) that for $x$, $y$ $\in X$,
\begin{equation}\label{1.14} \langle E\hat x,E\hat y \rangle= \vert X
\vert^{-1}\theta^*_i
\end{equation} where $i=\partial (x,y)$.
\bigskip

Let $\circ$ denote the entry-wise multiplication in ${\rm
Mat}_X(\mathbb{R})$. Then
$$A_i\circ A_j=\delta_{ij}A_i  \hskip 0.2in {\rm for}~~0\leq i, j\leq D,$$
\noindent so $M$ is closed under $\circ$. Thus there exists
$q^k_{ij}\in \mathbb{R} \ $ for $0\leq i, j, k\leq D$ such that
$$E_i\circ E_j=\vert X\vert^{-1}{\sum_{k=0}^D}q^k_{ij}E_k \hskip 0.2in {\rm for}~~
 0\leq i, j\leq D.$$
\medskip

$\Gamma$ is said to be {\it $Q$-polynomial} with respect to the
given ordering $E_0$, $E_1$,$\ldots$, $E_D$ of the primitive
idempotents, if for all integers $0\leq h, i, j \leq D$,
$q^h_{ij}=0$ (resp. $ q^h_{ij}\not= 0$) whenever one of $h, i, j$ is
greater than (resp. equal to) the sum of the other two. Let $E$
denote any primitive idempotent of $\Gamma$. Then $\Gamma$ is said
to be $Q$-polynomial with respect to $E$ whenever there exists an
ordering $E_0$, $E_1=E$,$\ldots$, $E_D$ of the primitive idempotents
of $\Gamma$, with respect to which $\Gamma$ is $Q$-polynomial.  If
$\Gamma$ is $Q$-polynomial with respect to $E$, then the associated
dual eigenvalues are distinct \cite[p.~384]{t:92}. \bigskip

The following theorem about the $Q$-polynomial property will be used
in this paper.
\smallskip

\begin{thm} (\cite[Theorem~3.3]{t:95})\label{N3.1}
Assume $\Gamma$ is $Q$-polynomial with respect to a primitive
idempotent $E$, and let $\theta_0^*, \ldots,\theta_D^*$ denote the
corresponding dual eigenvalues. Then for all integers $1\leq h\leq
D$, $0\leq i,j\leq D$ and for all $x,y\in X$ such that
$\partial(x,y)=h$,
\begin{equation} \label{eq3.14} \sum_{\scriptstyle{{z \in X\atop\partial (x,z)=i}}\atop
\partial (y,z)=j}E\hat{z} - \sum_{\scriptstyle{{z \in X\atop\partial (x,z)=j}}\atop
\partial (y,z)=i}E\hat{z}=p^h_{ij}\frac{\theta^*_i - \theta^*_j}{\theta^*_0 - \theta^*_h}(E\hat{x} - E\hat{y}).
\end{equation}
\end{thm}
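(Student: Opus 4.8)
The plan is to read \eqref{eq3.14} as an identity of vectors in the eigenspace $EV$ and to split it into a \emph{direction} part and a \emph{scalar} part. Fix $x,y\in X$ with $\partial(x,y)=h$, and for $0\le i,j\le D$ put $F_{ij}:=\sum_{z\in\Gamma_i(x)\cap\Gamma_j(y)}E\hat z$, a vector in $EV$; then the left-hand side of \eqref{eq3.14} is $F_{ij}-F_{ji}$, and since $|\Gamma_i(x)\cap\Gamma_j(y)|=p^h_{ij}=p^h_{ji}$ the two sums have equally many terms. I claim the whole theorem reduces to the single structural fact that $F_{ij}-F_{ji}$ is a scalar multiple of $E\hat x-E\hat y$; call this the direction statement. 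Note $E\hat x\neq E\hat y$, because $\langle E\hat x,E\hat y\rangle=|X|^{-1}\theta^*_h\neq|X|^{-1}\theta^*_0=\langle E\hat x,E\hat x\rangle$ by \eqref{1.14}, the dual eigenvalues being distinct and $h\ge1$; hence a scalar in the direction statement is unique.

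Granting the direction statement, write $F_{ij}-F_{ji}=c\,(E\hat x-E\hat y)$ and determine $c$ by pairing with $E\hat x$. By \eqref{1.14} every $z$ contributing to $F_{ij}$ has $\partial(x,z)=i$ and every $z$ contributing to $F_{ji}$ has $\partial(x,z)=j$, so
\[
\langle F_{ij}-F_{ji},E\hat x\rangle=|X|^{-1}p^h_{ij}\,(\theta^*_i-\theta^*_j),
\]
while $\langle E\hat x-E\hat y,E\hat x\rangle=|X|^{-1}(\theta^*_0-\theta^*_h)$. Since $\theta^*_0\neq\theta^*_h$ we may divide, obtaining $c=p^h_{ij}(\theta^*_i-\theta^*_j)/(\theta^*_0-\theta^*_h)$, which is exactly \eqref{eq3.14}. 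This step is immediate and uses only \eqref{1.14} and distinctness of the $\theta^*_i$; in particular it shows automatically that $c$ depends only on $h,i,j$ and not on the chosen pair $x,y$.

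The content is therefore the direction statement, and this is where the $Q$-polynomial hypothesis must be spent. I would first record a reformulation: for $w\in EV$ one has $w=Ew$, hence $\langle F_{ij},w\rangle=\sum_{z\in\Gamma_i(x)\cap\Gamma_j(y)}w_z$ and $w_x=\langle w,E\hat x\rangle$, so the direction statement is equivalent to the assertion that $\sum_{z\in\Gamma_i(x)\cap\Gamma_j(y)}w_z=\sum_{z\in\Gamma_j(x)\cap\Gamma_i(y)}w_z$ for every eigenvector $w\in EV$ with $w_x=w_y=0$. To establish this I would invoke $Q$-polynomiality in operator form: the vanishing of the Krein parameters $q^k_{1j}$ outside $k\in\{j-1,j,j+1\}$ is equivalent to the dual tridiagonal relations $E_rA^*E_s=0$ for $|r-s|>1$, where $A^*=A^*(x)$ is the diagonal matrix with $A^*\hat z=\theta^*_{\partial(x,z)}\hat z$, i.e.\ to a three-term recurrence for the idempotents under $\circ$. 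Because the $\theta^*_j$ are distinct, each $F_{ij}$ is recovered by Vandermonde inversion from the power sums $\sum_j(\theta^*_j)^mF_{ij}$, $0\le m\le D$; and for $m\ge1$ the latter equal $|X|\,E\,E^*_i(x)\,(A^*(y))^{m-1}E\hat y$. The tridiagonal relations then confine all the relevant vectors to $\mathrm{span}\{E\hat x,E\hat y\}$, which is the direction statement.

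The main obstacle is exactly this last confinement, and it is worth saying why the naive approaches fail. One cannot simply expand $\langle F_{ij}-F_{ji},E\hat w\rangle$ over all $w\in X$ and match coefficients, because that expansion produces triple intersection numbers $|\Gamma_i(x)\cap\Gamma_j(y)\cap\Gamma_t(w)|$, which are not determined by the parameters of $\Gamma$. For the same reason one cannot set up a clean recurrence in $(i,j)$ by applying $A_1$ to the characteristic vector of $\Gamma_i(x)\cap\Gamma_j(y)$: the local counts $|\Gamma_1(u)\cap\Gamma_i(x)\cap\Gamma_j(y)|$ are not constant on the fibers $\Gamma_{i'}(x)\cap\Gamma_{j'}(y)$. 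The whole point of $Q$-polynomiality is to bypass these uncontrolled counts by working at the level of the primitive idempotents inside $EV$ rather than by vertex counting; carrying out the Vandermonde/tridiagonal argument rigorously—above all pinning down the first nontrivial case directly from the Krein conditions—is where essentially all the labor lies.
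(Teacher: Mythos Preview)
The paper does not prove this theorem at all: it is quoted verbatim from \cite[Theorem~3.3]{t:95} and used as a black box. So there is no ``paper's own proof'' to compare against; the only question is whether your argument stands on its own.

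Your reduction to the \emph{direction statement} is clean and correct: once one knows $F_{ij}-F_{ji}\in\mathrm{span}\{E\hat x-E\hat y\}$, pairing with $E\hat x$ via \eqref{1.14} immediately produces the constant $p^h_{ij}(\theta^*_i-\theta^*_j)/(\theta^*_0-\theta^*_h)$, and your observation that $E\hat x\neq E\hat y$ because $\theta^*_0\neq\theta^*_h$ is exactly what makes that division legitimate. This part is fine.

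The gap is the direction statement itself, and you are candid about this (``essentially all the labor lies'' there). But the sketch you offer does not close it. The dual tridiagonal relation $E_rA^*E_s=0$ for $|r-s|>1$ controls how a \emph{single} dual adjacency matrix $A^*(x)$ moves vectors among the eigenspaces $E_rV$; it says nothing directly about the interaction of $E^*_i(x)$ and $A^*(y)$ for two different base points, which is what your expression $E\,E^*_i(x)\,(A^*(y))^{m-1}E\hat y$ requires. In particular, applying $A^*(y)$ to a vector in $EV=E_1V$ lands you in $(E_0+E_1+E_2)V$, not back in $EV$, so iterating and then hitting with $E^*_i(x)$ does not obviously stay inside $\mathrm{span}\{E\hat x,E\hat y\}$. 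The Vandermonde inversion recovers the individual $F_{ij}$ from the power sums, but you have not shown that those power sums themselves lie in the two-dimensional span, nor that the antisymmetric combinations $F_{ij}-F_{ji}$ do. Terwilliger's original argument for this ``balanced set'' condition is genuinely delicate and does not reduce to a one-line appeal to tridiagonality; absent a concrete mechanism tying $A^*(x)$ and $A^*(y)$ together on $EV$, the proof is incomplete.
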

\bigskip

$\Gamma$ is said to have {\it classical parameters} $(D, b, \alpha,
\beta)$ whenever the intersection numbers of $\Gamma$ satisfy
\begin{eqnarray}
c_i &=& {i\atopwithdelims [] 1}\biggl (1 + \alpha
{i-1\atopwithdelims [] 1}\biggr ) \ \ \ \ \ \ {\rm for}~~0\leq i
\leq D,\label{2.17}\\
b_i &=& \biggl ({D\atopwithdelims [] 1} - {i\atopwithdelims []
1}\biggr ) \biggl (\beta - \alpha {i\atopwithdelims [] 1}\biggr ) \
\ \ \ \ {\rm for}~~0\leq i \leq D,\label{2.18}
\end{eqnarray}
where \begin{equation}\label{2.19}{i\atopwithdelims [] 1} := 1 + b +
b^2 + \cdots +b^{i-1}. \end{equation}
\medskip

The following theorem characterizes the distance-regular graphs
with classical parameters in an algebraic way.

\begin{thm}(\cite[Theorem~4.2]{t:95})\label{N3.2}
 Let $\Gamma$ denote a distance-regular with diameter $D\geq 3$.
 Choose $b\in \mathbb{R}\setminus\{0, -1\},$ and let ${~\atopwithdelims [] ~}$ be as in (\ref{2.19}). Then the following
 (i)-(ii) are equivalent.
 \begin{enumerate}
\item[(i)] $\Gamma$ is $Q$-polynomial with associated dual
eigenvalues $\theta_0^*, \theta_1^*, \ldots,\theta_D^*$ satisfying
\begin{equation}\label{eq3-18}
\theta_i^*-\theta_0^*=(\theta_1^*-\theta_0^*){i\atopwithdelims [] 1}
b^{1-i} \ \ \ \ {\rm for}~~1\leq i \leq D.
\end{equation}

\item[(ii)] $\Gamma$ has classical parameters $(D, b,
\alpha, \beta)$ for some real constants $\alpha, \beta$.
\end{enumerate}
\end{thm}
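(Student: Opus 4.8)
The plan is to use a single three-term recurrence as the bridge between the two conditions. I would first record the identity that for the $Q$-polynomial primitive idempotent $E=E_1$, with dual eigenvalues $\theta_0^*,\dots,\theta_D^*$ and $A_1$-eigenvalue $\theta_1$, one has $c_i\theta_{i-1}^*+a_i\theta_i^*+b_i\theta_{i+1}^*=\theta_1\theta_i^*$ for $0\le i\le D$. This follows by applying $A_1E=\theta_1E$ to $\hat x$, writing $A_1\hat x=\sum_{w\sim x}\hat w$, taking the inner product with $E\hat y$ for $\partial(x,y)=i$, and evaluating each term by (\ref{1.14}): the neighbours $w$ of $x$ lying at distances $i-1,i,i+1$ from $y$ number $c_i,a_i,b_i$ and contribute $\theta_{i-1}^*,\theta_i^*,\theta_{i+1}^*$ respectively. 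Using (\ref{eq2.1}) to substitute $a_i=k-b_i-c_i$ rewrites this as $c_i(\theta_{i-1}^*-\theta_i^*)+b_i(\theta_{i+1}^*-\theta_i^*)=(\theta_1-k)\theta_i^*$, which is the engine for both directions.

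Next I would translate condition (i). Solving (\ref{2.19}) gives ${i\atopwithdelims [] 1}=(b^i-1)/(b-1)$, so (\ref{eq3-18}) is equivalent to $\theta_i^*=A+Bb^{-i}$ for constants $A,B$ with $B\ne 0$; equivalently, the $\theta_i^*$ are precisely the solutions of the constant-coefficient recurrence $b\theta_{i+1}^*-(b+1)\theta_i^*+\theta_{i-1}^*=0$, whose characteristic roots are $1$ and $b^{-1}$. This geometric form is what I want to match against the intersection numbers.

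For (ii) $\Rightarrow$ (i) I would insert the explicit $c_i,a_i,b_i$ from (\ref{2.17})--(\ref{2.18}) into the engine, now read as a second-order recurrence determining $\theta_{i+1}^*$ from $\theta_i^*,\theta_{i-1}^*$ once $\theta_1$ and the initial data $\theta_0^*,\theta_1^*$ are fixed; a direct computation shows this recurrence collapses to the constant-coefficient form above, so its solution is geometric and satisfies (\ref{eq3-18}), the $Q$-polynomial ordering being confirmed by the standard criterion relating such a recurrence for the dual eigenvalues to the vanishing pattern of the $q^h_{ij}$ (see \cite{bcn}). For (i) $\Rightarrow$ (ii) the engine alone is one equation in the two unknowns $c_i,b_i$, so I would supplement it with a second independent relation obtained by letting $A_2$ act on $E$, namely $A_2E=\theta_1^{(2)}E$ with $\theta_1^{(2)}=c_2^{-1}(\theta_1^2-a_1\theta_1-k)$; this produces a five-term relation among $\theta_{i-2}^*,\dots,\theta_{i+2}^*$ whose coefficients are the numbers $p^i_{2,i+r}$, and expressing these through $c_i,b_i$ and their neighbours by the usual product formulas turns it into a second relation in $c_i,b_i$. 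After substituting the geometric form, the two relations form a solvable system whose solution I would verify equals (\ref{2.17})--(\ref{2.18}) once $\alpha,\beta$ are read off from $A,B,b,\theta_1$.

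The main obstacle is the (i) $\Rightarrow$ (ii) direction: producing a genuinely independent second relation (equivalently, exploiting the full strength of the $Q$-polynomial identity in Theorem~\ref{N3.1}, not merely the $A_1$-eigenvalue equation) and then carrying out the algebra that matches the resulting closed forms for $c_i$ and $b_i$ to the classical templates. I expect the bookkeeping to simplify greatly in the variable $u_i:={i\atopwithdelims [] 1}$, because $u_i$ is a M\"obius transform of $\theta_i^*$ and both $c_i=u_i+\alpha b^{-1}u_i(u_i-1)$ and $b_i=(u_D-u_i)(\beta-\alpha u_i)$ are quadratic in $u_i$ exactly when the parameters are classical; establishing that quadratic dependence is where the real content lies.
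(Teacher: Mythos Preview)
The paper does not prove this theorem. Theorem~\ref{N3.2} is quoted from Terwilliger \cite[Theorem~4.2]{t:95} as background, with no argument supplied; the paper only uses the formula (\ref{eq3-18}) later, in the proof of Lemma~\ref{N4.5}. So there is no ``paper's own proof'' to compare your proposal against.

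That said, a brief remark on your outline. Your engine $c_i\theta_{i-1}^*+a_i\theta_i^*+b_i\theta_{i+1}^*=\theta_1\theta_i^*$ holds for the dual eigenvalues of \emph{every} primitive idempotent, $Q$-polynomial or not, so in the (ii)$\Rightarrow$(i) direction it can at best produce the geometric form of the $\theta_i^*$; it cannot by itself certify that the idempotent is $Q$-polynomial. You wave at a ``standard criterion'' in \cite{bcn}, but the criterion that actually does the job (essentially Leonard's theorem, or the equivalent conditions in \cite[Chapter~8]{bcn}) is a substantial input, not a formality, and is really where the content of this direction lives. In the (i)$\Rightarrow$(ii) direction your plan to extract a second independent relation from $A_2E$ is sound in spirit, but Terwilliger's original proof instead exploits the balanced-set identity (your Theorem~\ref{N3.1}) directly; the resulting algebra is cleaner than going through the $p^i_{2,i+r}$. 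Your instinct that the variable $u_i={i\atopwithdelims[]1}$ organizes the computation is correct.
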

\medskip

The following theorem characterizes the distance-regular graphs
with classical parameters and $a_1=0$, $a_2\not=0$ in a
combinatorial way.

\begin{thm}(\cite[Theorem~1.1]{plw:06})\label{N3.3}
Let $\Gamma$ denote a distance-regular graph with diameter
$D\geq3$ and intersection numbers $a_1=0$, $a_2\neq 0$. Then the
following (i)-(iii) are equivalent.
\begin{enumerate}
\item[(i)] $\Gamma$ is $Q$-polynomial and contains no
parallelograms of length $3$. \item[(ii)] $\Gamma$ is
$Q$-polynomial and contains no parallelograms of any length $i$
for $3\leq i\leq D$. \item[(iii)] $\Gamma$ has classical
parameters $(D, b, \alpha, \beta)$ for some real constants $b,
\alpha, \beta$.
\end{enumerate}
\end{thm}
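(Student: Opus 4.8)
My plan is to prove the three conditions equivalent by establishing the cycle (iii)$\Rightarrow$(ii)$\Rightarrow$(i)$\Rightarrow$(iii). The implication (ii)$\Rightarrow$(i) is immediate, since a parallelogram of length $3$ is in particular a parallelogram of length $i$ with $3\le i\le D$, so forbidding all of them forbids the length-$3$ ones. The bridge between the ``$Q$-polynomial'' hypotheses and the ``classical parameters'' conclusion is Theorem~\ref{N3.2}: it lets me replace the statement that $\Gamma$ has classical parameters by the concrete statement that $\Gamma$ is $Q$-polynomial with dual eigenvalues satisfying (\ref{eq3-18}). Thus in both nontrivial directions $Q$-polynomiality is common to hypothesis and conclusion, and the real content is to relate the combinatorial parallelogram conditions to the closed form (\ref{eq3-18}) of the dual eigenvalues. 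The main tool throughout is the Terwilliger relation (\ref{eq3.14}) of Theorem~\ref{N3.1}, together with the inner-product evaluation (\ref{1.14}).

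For (iii)$\Rightarrow$(ii), assume $\Gamma$ has classical parameters. By Theorem~\ref{N3.2}, $\Gamma$ is $Q$-polynomial with dual eigenvalues obeying (\ref{eq3-18}); in particular the successive differences $\theta^*_i-\theta^*_{i-1}$ form a geometric sequence with ratio $b^{-1}$, and the $\theta^*_i$ are mutually distinct. Suppose, for contradiction, that $xyzw$ is a parallelogram of length $i$ for some $3\le i\le D$. I would apply (\ref{eq3.14}) to the adjacent pair $z,w$ (so $h=1$) with the index choice $(i,i-1)$, arranged so that $x$ (which satisfies $\partial(z,x)=i$, $\partial(w,x)=i-1$) appears in exactly one of the two sums while $y$ (at distances $i-1,i-1$) cancels. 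Pairing the resulting vector identity with the vectors $E\hat x$, $E\hat y$, $E\hat z$, $E\hat w$ and evaluating each inner product through (\ref{1.14}) yields scalar equations among $\theta^*_{i-2},\theta^*_{i-1},\theta^*_i$ and the intersection numbers. Feeding in the explicit geometric form of the $\theta^*$'s, together with $a_1=0$ and $a_2\ne0$, these equations become inconsistent, which is the desired contradiction.

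For (i)$\Rightarrow$(iii), assume $\Gamma$ is $Q$-polynomial with no parallelogram of length $3$. By Theorem~\ref{N3.2} it suffices to verify that the dual eigenvalues satisfy (\ref{eq3-18}), equivalently that the differences $\theta^*_i-\theta^*_{i-1}$ are in geometric progression. Since $\Gamma$ is $Q$-polynomial, the $\theta^*_i$ already obey the standard three-term $Q$-polynomial recurrence, so only a single normalizing constant separates the classical case from the remaining Leonard families. I would extract one scalar equation from the absence of length-$3$ parallelograms: running the inner-product argument of the previous paragraph in reverse at $i=3$ shows that if (\ref{eq3-18}) failed at the first nontrivial step, then the vector identity produced by (\ref{eq3.14}) would force the existence of a vertex completing a length-$3$ parallelogram (using $a_2\ne0$ to guarantee the relevant configuration is nonempty and $a_1=0$ to control the neighborhoods). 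This pins the normalizing constant to its classical value, and the $Q$-polynomial recurrence then propagates the geometric form to all $i$, giving (\ref{eq3-18}) and hence (iii).

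The main obstacle is the quantitative heart shared by the two nontrivial directions: converting the purely combinatorial (non)existence of a parallelogram into an exact identity among three consecutive dual eigenvalues, and back. Concretely, this means choosing the right vertices against which to pair the identity (\ref{eq3.14}) so that all but a controlled set of terms cancel, and then carrying out the evaluation through (\ref{1.14}) carefully enough that the hypotheses $a_1=0$ and $a_2\ne0$ can rule out the degenerate possibilities. Once this single dual-eigenvalue identity is established at one step, Theorem~\ref{N3.2} and the $Q$-polynomial recurrence handle the remaining bookkeeping.
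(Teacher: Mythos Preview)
The paper does not prove Theorem~\ref{N3.3}; it merely quotes it from \cite{plw:06} and uses it as input to Section~4. So there is no in-paper argument to compare your proposal against.

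On the merits of your outline: the cycle you propose and the use of Theorems~\ref{N3.1} and~\ref{N3.2} are the right scaffolding, and your (iii)$\Rightarrow$(ii) sketch is in the right spirit. But the (i)$\Rightarrow$(iii) step, as written, has a genuine gap. You assert that the $Q$-polynomial recurrence leaves ``only a single normalizing constant'' between the classical family and the others, and that the absence of a length-$3$ parallelogram supplies exactly one scalar identity pinning that constant down. Neither half of this is justified. First, the Leonard/$Q$-polynomial classification has several qualitatively distinct families, and reducing to the classical one is not a matter of fixing one free scalar; you would at minimum need to identify which parameter in which parametrization you are determining. Second, your ``run the inner-product argument in reverse'' claim is not an argument: the forward direction takes an existing parallelogram and produces a scalar contradiction, but the contrapositive would require you to show that a \emph{specific} counting sum is nonzero, and from that produce an actual configuration $xyzw$ with the required distances. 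Inner products with (\ref{eq3.14}) give averages over many vertices, not the existence of a single witness, so you would need an additional combinatorial step (typically a careful local analysis using $a_1=0$, $a_2\neq 0$, together with the $2$-bounded structure) to extract a parallelogram from the numerical discrepancy. Until that step is supplied, the implication (i)$\Rightarrow$(iii) is not established.
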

\bigskip

\section{Proof of main theorem}
Assume $\Gamma=(X,R)$ is a distance-regular graph with classical
parameters $(D, b, \alpha, \beta)$ and $D\geq3$. Suppose the
intersection numbers $a_1=0$ and $a_2\neq 0$. Then $\Gamma$
contains no parallelograms of any length by Theorem \ref{N3.3}. We
first give a definition.
\begin{defn}\label{N4.1}
For any vertex $x\in X$ and any subset $C\subseteq X$, define
$$[x,C]:=\{v\in X\mid {\rm there\;exists}\;z\in C,\; {\rm such\; that}\; \partial(x,v)+\partial(v,z)=\partial(x,z)\}.$$
\end{defn}

Throughout this section, fix two vertices $x, y\in X$ with
$\partial(x,y)=3$. Set
$$C:=\{z\in\Gamma_3(x)~|~B(x, y)=B(x, z)\}$$
and
\begin{equation}\label{eq4.0}
    \Delta =[x, C].
\end{equation}
 We shall prove $\Delta$ is a regular weak-geodetically closed subgraph of diameter $3$. Note that the
diameter of $\Delta$ is at least $3$. If $D=3$ then $C=\Gamma_3(x)$
and $\Delta=\Gamma$ is clearly a regular weak-geodetically closed
graph. Thereafter we assume $D\geq 4$. By referring to Theorem
\ref{N2.2}, we shall prove $\Delta$ is weak-geodetically closed with
respect to $x$, and the subgraph induced on $\Delta$ is regular with
valency $a_3+c_3.$
\bigskip

\begin{lem}\label{N4.2}
For all adjacent vertices $z, z'\in\Gamma_i(x)$, where
$i\leq D$, we have $B(x, z)=B(x, z')$.
\end{lem}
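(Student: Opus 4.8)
The plan is to prove the two inclusions $B(x,z)\subseteq B(x,z')$ and $B(x,z')\subseteq B(x,z)$ separately; since the hypotheses are symmetric in $z$ and $z'$, it suffices to establish the first. I would first dispose of the boundary cases. Because $z,z'$ are adjacent, $i\geq 1$ (if $i=0$ then $z=z'=x$, contradicting $\partial(z,z')=1$). If $i=D$, then $\Gamma_{i+1}(z)=\Gamma_{i+1}(z')=\emptyset$, so $B(x,z)=B(x,z')=\emptyset$ and there is nothing to prove. Hence assume $1\leq i\leq D-1$.

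Now fix $w\in B(x,z)$, so that $\partial(x,w)=1$ and $\partial(z,w)=i+1$. The next step is to pin down $\partial(z',w)$ by applying the triangle inequality twice. Since $z\sim z'$ and $\partial(z,w)=i+1$, we have $\partial(z',w)\in\{i,i+1,i+2\}$; since $x\sim w$ and $\partial(x,z')=i$, we have $\partial(z',w)\in\{i-1,i,i+1\}$. Intersecting these, $\partial(z',w)\in\{i,i+1\}$. To conclude $w\in B(x,z')$ it then remains only to exclude $\partial(z',w)=i$.

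The key step is to observe that $\partial(z',w)=i$ would force the $4$-tuple $zz'wx$ to be a parallelogram of length $i+1$. Indeed, with $\ell=i+1$ one checks against the definition that $\partial(z,z')=\partial(w,x)=1$, $\partial(z,w)=i+1=\ell$, and $\partial(z,x)=\partial(z',x)=\partial(z',w)=i=\ell-1$. But $\Gamma$ has classical parameters with $a_1=0$ and $a_2\neq 0$, so by Theorem~\ref{N3.3} it contains no parallelograms of any length, a contradiction. Therefore $\partial(z',w)=i+1$, and together with $\partial(x,w)=1$ this gives $w\in B(x,z')$. This proves $B(x,z)\subseteq B(x,z')$, and interchanging the roles of $z$ and $z'$ yields the reverse inclusion and hence equality.

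Since each step is a short metric computation that feeds into the parallelogram-freeness of $\Gamma$, I do not anticipate a serious obstacle. The only real content is recognizing the parallelogram $zz'wx$ hidden in the assumed distance configuration; once it is identified, the no-parallelogram hypothesis supplied by Theorem~\ref{N3.3} closes the argument immediately.
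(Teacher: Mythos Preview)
Your proof is correct and follows essentially the same approach as the paper: reduce to one inclusion by symmetry, bound $\partial(z',w)$ via the triangle inequality, and observe that $\partial(z',w)=i$ would yield a parallelogram of length $i+1$, contradicting Theorem~\ref{N3.3}. Your version is slightly more explicit about the boundary cases $i=0$ and $i=D$, but otherwise the argument is the same (the paper writes the parallelogram as $wxz'z$, which appears to be a minor mislabeling of what should be $wxzz'$, i.e., your $zz'wx$).
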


\begin{proof}
By symmetry, it suffices to show $B(x, z)\subseteq B(x, z')$.
Suppose there exists $w\in B(x, z)\setminus B(x, z')$. Then
$\partial(w,z')\neq i+1$. Note that
$\partial(w,z')\leq\partial(w,x)+\partial(x,z')=1+i$ and
$\partial(w,z')\geq\partial(w,z)-\partial(z,z')=i$. This implies
$\partial(w,z')=i$ and
$wxz'z$ forms a parallelogram of length $i+1$, a contradiction.
\end{proof}

We have known $\Gamma$ is $2$-bound by Theorem \ref{N2.4}. For two
vertices $z,s$ in $\Gamma$ with $\partial(z,s)=2$, let $\Omega(z,s)$
denote the regular weak-geodetically closed subgraph containing
$z,s$ of diameter $2$.
\bigskip

\begin{lem}\label{N4.3} Suppose $stuzw$ is a pentagon
in $\Gamma$, where $s,u\in\Gamma_3(x)$ and $z\in\Gamma_{2}(x)$. Pick
$v\in B(x,u)$. Then $\partial(v,s)\neq 2$.
\end{lem}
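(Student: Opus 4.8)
The plan is to argue by contradiction: assume $\partial(v,s)=2$ and derive a distance for $z$ that is simultaneously forced to be $4$ and at most $3$. The engine is the ``gate'' result Theorem~\ref{N2.5}, applied to the single diameter-$2$ weak-geodetically closed subgraph that swallows the whole pentagon. The one nonobvious input is that $z$, which lies on the $x$-near side of the pentagon, is trapped in the same subgraph as $s$ and $u$.

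First I would collapse the pentagon into one subgraph. Since $a_1=0$ the graph is triangle-free, so the two pentagon vertices $s,u$ two steps apart satisfy $\partial(s,u)=2$; by $2$-boundedness (Theorem~\ref{N2.4}) set $\Omega:=\Omega(s,u)$, a regular weak-geodetically closed subgraph of diameter $2$ containing $s,u$. Using that $\Omega$ is weak-geodetically closed with respect to $u$, i.e. $C(p,u)\subseteq\Omega$ and $A(p,u)\subseteq\Omega$ for every $p\in\Omega$, I would chase the pentagon inside: from $s\in\Omega$ and $w\in A(s,u)$ (as $w\sim s$ and $\partial(w,u)=2$) we get $w\in\Omega$, and then from $z\in C(w,u)$ (as $z\sim w$ and $z\sim u$, with $\partial(w,u)=2$) we get $z\in\Omega$. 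Thus $s,u,z\in\Omega$ with $\partial(s,z)=2$ (their common pentagon neighbor being $w$).

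Next I would place $v$ relative to $\Omega$. Because $v\in B(x,u)$ we have $\partial(v,u)=4$, while the contradiction hypothesis gives $\partial(v,s)=2$. Hence $s\in\Omega\cap\Gamma_2(v)$ and $u\in\Omega\cap\Gamma_4(v)\neq\emptyset$, so Theorem~\ref{N2.5} applies to $\Omega$ with base vertex $v$ and $i=3$, yielding $\partial(v,t')=2+\partial(s,t')$ for every $t'\in\Omega$. Taking $t'=z$ gives $\partial(v,z)=2+\partial(s,z)=4$. This contradicts the triangle inequality $\partial(v,z)\le\partial(v,x)+\partial(x,z)=1+2=3$, valid since $v\sim x$ and $z\in\Gamma_{2}(x)$. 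Therefore $\partial(v,s)\neq 2$.

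The main obstacle is exactly the containment step $z\in\Omega(s,u)$: the conclusion of the lemma is false for the ``wrong'' position of $v$, and the whole force of the argument comes from realizing that $s$ becomes the unique gate of $\Omega$ seen from $v$ once $\partial(v,s)=2$, which then pushes the near vertex $z$ out to distance $4$. I expect that identifying and verifying this collapse of the pentagon into a single diameter-$2$ subgraph (via the closure relations $A(s,u),\,C(w,u)\subseteq\Omega$) is where care is needed; after that, the application of Theorem~\ref{N2.5} and the final triangle-inequality contradiction are immediate and require no computation.
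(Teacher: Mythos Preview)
Your argument is correct and matches the paper's proof essentially line for line: both collapse the pentagon into a single diameter-$2$ weak-geodetically closed subgraph, apply Theorem~\ref{N2.5} with $s$ as the gate seen from $v$ (using $s\in\Gamma_2(v)$ and $u\in\Gamma_4(v)$), and obtain $\partial(v,z)=4$ in contradiction with $\partial(v,z)\le\partial(v,x)+\partial(x,z)=3$. The only cosmetic difference is that the paper names the subgraph $\Omega(z,s)$ rather than $\Omega(s,u)$ and simply asserts that all five pentagon vertices lie in it, whereas you spell out the closure chase via $w\in A(s,u)$ and $z\in C(w,u)$.
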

\begin{proof}
Suppose $\partial(v,s)= 2$. Note $\partial(z,s)\neq 1$, since
$a_1=0$. Note that $z,w,s,t,u\in\Omega(z,s)$. Then
$s\in\Omega(z,s)\cap\Gamma_{2}(v)$ and
$u\in\Omega(z,s)\cap\Gamma_{4}(v)\neq\emptyset$. Hence
$\partial(v,z)=\partial(v,s)+\partial(s,z)=2+2=4$ by Theorem
\ref{N2.5}. A contradiction occurs since $\partial(v,x)=1$ and
$\partial(x,z)=2$.
\end{proof}

\begin{lem}\label{N4.5}
Suppose $stuzw$ is a pentagon in $\Gamma$, where $s,u\in\Gamma_3(x)$
and $z\in\Gamma_{2}(x)$. Then $B(x,s)=B(x,u)$.
\end{lem}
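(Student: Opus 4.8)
The plan is to exploit the cardinality identity $|B(x,s)|=|B(x,u)|=b_3$, which holds because $\partial(x,s)=\partial(x,u)=3$ and $|B(x,y)|=p^{\partial(x,y)}_{1,\partial(x,y)+1}$ depends only on $\partial(x,y)$. Hence it suffices to prove a single inclusion, say $B(x,u)\subseteq B(x,s)$, and equality follows for free. So I fix $v\in B(x,u)$, which means $v\in\Gamma_1(x)$ and $\partial(v,u)=4$, and I must show $\partial(v,s)=4$. Since $v\sim x$ and $\partial(x,s)=3$, we have $\partial(v,s)\in\{2,3,4\}$, and Lemma~\ref{N4.3}, applied to the given pentagon with this very $v$, already excludes $\partial(v,s)=2$. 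Thus the entire problem reduces to ruling out $\partial(v,s)=3$. Before attacking that I record the forced value $\partial(v,z)=3$: the edge $u\sim z$ gives $\partial(v,z)\ge\partial(v,u)-1=3$, while $v\sim x$ and $z\in\Gamma_2(x)$ give $\partial(v,z)\le 3$.

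First I would dispose of the benign configuration in which the common neighbour $t$ of $s$ and $u$ lies in $\Gamma_3(x)$: then $s\sim t\sim u$ is a path inside $\Gamma_3(x)$, and two applications of Lemma~\ref{N4.2} yield $B(x,s)=B(x,t)=B(x,u)$, settling the lemma at once. To see that this is essentially the only good case I would use the weak-geodetic closure machinery: by $2$-boundedness the diameter-$2$ subgraph $\Omega:=\Omega(z,s)$ contains all five pentagon vertices (as in the proof of Lemma~\ref{N4.3}), and feeding $x$ as the external vertex into Theorem~\ref{N2.5} with $z\in\Omega\cap\Gamma_2(x)$ shows that $\partial(x,t)=4$ would force $\partial(x,s)=2+\partial(z,s)=4$, a contradiction. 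Hence $\partial(x,t)\in\{2,3\}$, and I may assume $\partial(x,t)=2$; then $t\in\Gamma_2(x)$ forces $\partial(v,t)\le 3$, while $t\sim u$ with $\partial(v,u)=4$ forces $\partial(v,t)\ge 3$, so $\partial(v,t)=3$.

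Assuming now $\partial(v,s)=3$ and $\partial(x,t)=2$, the goal is to produce a forbidden parallelogram, recalling that $\Gamma$ has none of any length. The same Theorem~\ref{N2.5} argument, this time with external vertex $v$, forces $\partial(v,w)\neq 2$ (otherwise the identity $\partial(v,t)=2+\partial(w,t)$ would push $\partial(v,t)$ to $4$), and neighbour bounds leave $\partial(x,w)\in\{2,3\}$ and $\partial(v,w)\in\{3,4\}$. When $\partial(x,w)=3$ and $\partial(v,w)=4$ the quadruple $vxws$ is a parallelogram of length $4$, since its cross-distances are $\partial(v,w)=4$ and $\partial(v,s)=\partial(x,w)=\partial(x,s)=3$, which is exactly the contradiction sought. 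The hard part, and where I expect the real work to lie, is the residual sub-case $\partial(v,w)=3$: here no parallelogram appears among $x$, $v$ and the pentagon vertices alone, so closing it will require pushing the weak-geodetic/Theorem~\ref{N2.5} machinery further, or manufacturing a parallelogram from an auxiliary common neighbour guaranteed by $a_2\neq 0$. Once $\partial(v,s)=3$ is excluded in every sub-case, we get $\partial(v,s)=4$, i.e. $v\in B(x,s)$, and the cardinality reduction of the first paragraph finishes the proof.
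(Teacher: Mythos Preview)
Your reduction to ruling out $\partial(v,s)=3$ is sound, and the parallelogram $vxws$ you produce when $\partial(x,w)=3$ and $\partial(v,w)=4$ is correct. But the case you flag as ``the hard part'' --- $\partial(v,w)=3$ --- is a genuine gap, not just residual bookkeeping. In that configuration every pairwise distance among $x,v,s,t,u,z,w$ is pinned down (with $\partial(x,t)=\partial(x,w)\in\{2,3\}$), yet no parallelogram of any length sits on these seven vertices, and Theorem~\ref{N2.5} applied to $\Omega(z,s)$ with external vertex $x$ or $v$ yields nothing new. Bringing in an auxiliary common neighbour via $a_2\neq 0$ does not obviously help either: you land back in the same kind of undetermined configuration one step over. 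There is no indication that a purely combinatorial argument using only Lemma~\ref{N4.2}, Lemma~\ref{N4.3}, Theorem~\ref{N2.5}, and the no-parallelogram hypothesis can close this case; indeed, up to this point you have used nothing that distinguishes classical parameters from the weaker hypothesis ``$Q$-polynomial with no parallelograms of any length,'' and it is precisely here that the classical-parameter assumption must enter.

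The paper's proof is completely different and avoids the case analysis altogether. After invoking Lemma~\ref{N4.3} to get $B(x,u)\subseteq\Gamma_3(s)\cup\Gamma_4(s)$, it sets $m=|B(x,u)\cap\Gamma_3(s)|$, $n=|B(x,u)\cap\Gamma_4(s)|$, and computes $n$ directly using the $Q$-polynomial balanced-set relation (Theorem~\ref{N3.1}) with $h=3$, $i=1$, $j=4$: taking the inner product of $E\hat{s}$ with both sides of
\[
\sum_{r\in B(x,u)}E\hat r-\sum_{r\in B(u,x)}E\hat r
=b_3\,\frac{\theta_1^*-\theta_4^*}{\theta_0^*-\theta_3^*}\,(E\hat x-E\hat u),
\]
and using $B(u,x)\subseteq\Gamma_3(s)$ (from Theorem~\ref{N2.5}), gives a linear equation in $m,n$. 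Together with $m+n=b_3$ and the explicit form (\ref{eq3-18}) of the dual eigenvalues for classical parameters, one gets $n=b_3$, hence $m=0$, which is exactly $B(x,u)\subseteq B(x,s)$. This is where the classical-parameter hypothesis is actually consumed; your combinatorial route never invokes it, which is why it stalls.
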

\begin{proof}
Since $\vert B(x,s)\vert = \vert B(x,u)\vert =b_3$, it suffices to
show $B(x,u)\subseteq B(x,s)$.

\noindent By Lemma \ref{N4.3},
$$B(x,u)\subseteq\Gamma_3(s)\cup\Gamma_{4}(s).$$
Suppose
\begin{eqnarray*}
  \vert B(x,u)\cap\Gamma_3(s)\vert &=& m, \\
  \vert B(x,u)\cap\Gamma_{4}(s)\vert &=& n.
\end{eqnarray*}
Then
\begin{equation}\label{eq4.1}
    m+n=b_3.
\end{equation}
By Theorem \ref{N3.1},
\begin{equation}\label{eq4.2}
\sum_{\scriptstyle{r \in B(x,u)}}E\hat{r} - \sum_{\scriptstyle{r \in
B(u,x)}}E\hat{r}=b_{3}\frac{\theta^*_1 - \theta^*_4}{\theta^*_0 -
\theta^*_3}(E\hat{x} - E\hat{u}).
\end{equation}
 Observe $B(u,x)\subseteq\Gamma_3(s)$, otherwise $\Omega(u,s)\cap B(u,x)\neq\emptyset$ and this leads $\partial(x,s)=4$ by Theorem \ref{N2.5},
 a contradiction. Taking the inner product of $s$ with both side of
(\ref{eq4.2}) and evaluating the result using (\ref{1.14}), we have
\begin{equation}\label{eq4.3}
    m\theta^{*}_{3}+n\theta^{*}_{4}-b_3\theta^{*}_{3}=b_{3}\frac{\theta^*_1 - \theta^*_4}{\theta^*_0 -
\theta^*_3}(\theta^{*}_{3}-\theta^{*}_{2}).
\end{equation}
Solve (\ref{eq4.1}) and (\ref{eq4.3}) to obtain
\begin{equation}\label{eq4.4}
    n=b_{3}\frac{(\theta^*_2-\theta^*_3)}{(\theta^*_3-\theta^*_4)}\frac{(\theta^*_1-\theta^*_4)}{(\theta^*_0-\theta^*_3)}.
\end{equation}
Simplifying (\ref{eq4.4}) by (\ref{eq3-18}), we have $n=b_3$ and
then $m=0$ by (\ref{eq4.1}). This implies $B(x,u)\subseteq B(x,s)$
and ends the proof.
\end{proof}

\begin{lem}\label{N4.6} Let $z,u\in\Delta$. Suppose
$stuzw$ is a pentagon in $\Gamma$, where $z,w\in\Gamma_{2}(x)$ and
$u\in\Gamma_{3}(x)$. Then $w\in\Delta$.
\end{lem}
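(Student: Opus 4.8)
The plan is to exhibit a single vertex $c\in C$ adjacent to $w$; once we have such a $c$, then $\partial(x,w)+\partial(w,c)=2+1=3=\partial(x,c)$ shows $w\in[x,C]=\Delta$, which is exactly what we want. First I would record that $u\in C$. Since every vertex of $\Delta$ lying in $\Gamma_3(x)$ must coincide with the endpoint in $C$ of the geodesic witnessing its membership, we have $\Delta\cap\Gamma_3(x)=C$, so from $u\in\Delta\cap\Gamma_3(x)$ we get $B(x,u)=B(x,y)$. Next, because $a_1=0$ and $w\sim z\sim u$ with $w\neq u$, the vertices $w,u$ satisfy $\partial(w,u)=2$ (adjacency would force the triangle $wzu$). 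By $2$-boundedness (Theorem~\ref{N2.4}) the diameter-$2$ regular weak-geodetically closed subgraph $\Omega:=\Omega(w,u)$ exists, and $z\in\Omega$ since $z$ is a common neighbour of $w$ and $u$.

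The key geometric step is to confine $\Omega$ to the two spheres $\Gamma_2(x)\cup\Gamma_3(x)$, using Theorem~\ref{N2.5} twice. If some $p\in\Omega\cap\Gamma_1(x)$ existed, then applying Theorem~\ref{N2.5} with $i=2$, base vertex $p$, and the witness $u\in\Omega\cap\Gamma_3(x)$ would give $\partial(x,t)=1+\partial(p,t)$ for all $t\in\Omega$; evaluating at $t=w$ and $t=z$ forces $p\sim w$ and $p\sim z$, so $pwz$ is a triangle, contradicting $a_1=0$. Likewise, if some vertex of $\Omega\cap\Gamma_4(x)$ existed, Theorem~\ref{N2.5} with $i=3$ and base vertex $w\in\Omega\cap\Gamma_2(x)$ would give $\partial(x,u)=2+\partial(w,u)=4$, a contradiction. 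Hence $\Omega\subseteq\Gamma_2(x)\cup\Gamma_3(x)$.

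Now I would locate $c$ by a valency count. By Theorem~\ref{N2.2} the subgraph $\Omega$ is regular with valency $c_2+a_2$, so $w$ has exactly $c_2+a_2$ neighbours inside $\Omega$, all lying in $\Gamma_2(x)\cup\Gamma_3(x)$. At most $|A(w,x)|=a_2$ of them can lie in $\Gamma_2(x)$, so at least $c_2\geq1$ of them lie in $\Gamma_3(x)$; choose such a neighbour $c\in\Omega\cap\Gamma_3(x)$, so that $c\sim w$ and $c\neq u$.

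It remains to prove $B(x,c)=B(x,u)$, which then gives $c\in C$ and completes the argument. Since $c,u\in\Omega$ and $\Omega$ has diameter $2$, either $c\sim u$, in which case Lemma~\ref{N4.2} gives $B(x,c)=B(x,u)$ at once, or $\partial(c,u)=2$, when I pick a common neighbour $m$ of $c$ and $u$; weak-geodetic closure puts $m\in\Omega\subseteq\Gamma_2(x)\cup\Gamma_3(x)$. If $m\in\Gamma_3(x)$, two applications of Lemma~\ref{N4.2} along $c\sim m\sim u$ yield the claim; if $m\in\Gamma_2(x)$, then $(u,m,c,w,z)$ is a pentagon (its five vertices are distinct because $a_1=0$ forbids the relevant triangles) with $u,c\in\Gamma_3(x)$ and $w\in\Gamma_2(x)$ adjacent to $c$, so Lemma~\ref{N4.5} gives $B(x,c)=B(x,u)$. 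I expect the main obstacle to be the confinement $\Omega\subseteq\Gamma_2(x)\cup\Gamma_3(x)$ together with the valency count forcing a $\Gamma_3(x)$-neighbour of $w$ inside $\Omega$; spotting the correct pentagon $(u,m,c,w,z)$ for Lemma~\ref{N4.5} in the distance-$2$ case is the other delicate point.
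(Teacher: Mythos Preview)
Your argument is correct, but it takes a noticeably longer detour than the paper's proof. The key difference is that you never use the pentagon vertices $s$ and $t$ supplied in the hypothesis; instead you build $\Omega(w,u)$, confine it to $\Gamma_2(x)\cup\Gamma_3(x)$, and then manufacture a neighbour $c\in\Gamma_3(x)$ of $w$ by the valency count, followed by a case analysis on $\partial(c,u)$ to force $B(x,c)=B(x,u)$. The paper simply observes that the pentagon already hands you such a vertex: $s$ is adjacent to $w$. Working in $\Omega(z,s)$ (which contains the whole pentagon), the same Theorem~\ref{N2.5} confinement yields $\Omega(z,s)\subseteq\Gamma_2(x)\cup\Gamma_3(x)$; then $s\in\Gamma_2(x)$ is ruled out in one line, since it would force $s\in\Omega(x,z)$ (via $s\in A(w,x)$ with $w\in\Omega(x,z)$) and hence $u\in\Omega(x,z)$, contradicting the diameter of $\Omega(x,z)$. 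So $s\in\Gamma_3(x)$, and Lemma~\ref{N4.5} applied to the \emph{given} pentagon $stuzw$ immediately gives $B(x,s)=B(x,u)$, whence $s\in C$ and $w\in\Delta$.

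In effect your proof establishes the conclusion from the weaker hypothesis ``$u\in C$, $z,w\in\Gamma_2(x)$, and $u\sim z\sim w$'' (the remaining pentagon edges are never invoked), which is why you are forced to reconstruct a suitable $c$ and run the extra case split. That extra generality is not needed here and costs you the valency argument and the auxiliary pentagon $(u,m,c,w,z)$; the paper's route is shorter precisely because it exploits $s$ directly.
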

\begin{proof}
Observe $\Omega(z,s)\cap\Gamma_{1}(x)=\emptyset$ and
$\Omega(z,s)\cap\Gamma_{4}(x)=\emptyset$ by Theorem \ref{N2.5}.
Hence $s,t\in\Gamma_{2}(x)\cup\Gamma_{3}(x)$. Observe
$s\in\Gamma_{3}(x)$, otherwise $w,s\in\Omega(x,z)$, and this implies
$u\in\Omega(x,z)$, a contradiction to that the diameter of
$\Omega(x,z)$ is $2$. Hence $B(x,s)=B(x,u)$ by Lemma \ref{N4.5}.
Then $s\in C$ and $w\in\Delta$ by construction.
\end{proof}
\medskip

\begin{lem}\label{N4.7}
The subgraph $\Delta$ is weak-geodetically closed with respect to
$x$.
\end{lem}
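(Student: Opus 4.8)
The plan is to establish that $\Delta = [x,C]$ is weak-geodetically closed with respect to $x$ by verifying the criterion from \cite[Lemma~2.3]{w:98} quoted right after Definition~\ref{N2.1}: namely that for every $t\in\Delta$, both $C(t,x)\subseteq\Delta$ and $A(t,x)\subseteq\Delta$. Since $a_1=0$ forces $A(t,x)$ to relate to $a_i$-structure, and because the graph is triangle-free, I expect the $A$-condition to be handled separately from (or perhaps vacuously relative to) the $C$-condition at low levels. The core of the work is to show that whenever $t\in\Delta$ and $w$ is a neighbor of $t$ lying one step closer to $x$ (i.e.\ $w\in C(t,x)$), then $w\in\Delta$ as well.

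First I would organize the argument by the distance $\partial(x,t)$, noting $t\in\Delta$ means $t\in[x,C]$, so $\partial(x,t)\le 3$. The cases $\partial(x,t)=0,1$ are immediate since vertices on a geodesic from $x$ to any $z\in C$ automatically sit in $\Delta$ by Definition~\ref{N4.1}, and $\Gamma_1(x)\subseteq\Delta$ follows because $a_1=0$ makes every neighbor of $x$ lie on a geodesic to some $z\in C$ (using $b_0,b_1,b_2\neq 0$ to extend). The substantive cases are $\partial(x,t)=2$ and $\partial(x,t)=3$. For $t\in\Delta\cap\Gamma_3(x)$, I would use the definition of $C$ together with Lemma~\ref{N4.2} (adjacent vertices at the same distance from $x$ share the same $B(x,\cdot)$) to show $t\in C$, so that any $w\in C(t,x)\cap\Gamma_2(x)$ lies on a geodesic $x,w,t$ and hence is in $\Delta$ by construction.

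The heart of the proof is the case $t\in\Delta\cap\Gamma_2(x)$: I must show each $w\in C(t,x)\subseteq\Gamma_1(x)$ and each neighbor in $A(t,x)$ stays in $\Delta$, but more delicately, I must show the closure property propagates correctly among the $\Gamma_2(x)$ vertices. This is exactly where Lemma~\ref{N4.6} enters: given $z,u\in\Delta$ with $z\in\Gamma_2(x)$, $u\in\Gamma_3(x)$ forming a pentagon $stuzw$ with $w\in\Gamma_2(x)$, the lemma already delivers $w\in\Delta$. So the plan is to reduce the closure condition at distance~$2$ to producing such pentagon configurations, invoking triangle-freeness ($a_1=0$) to guarantee the relevant pairs are at distance~$2$ (not adjacent) and invoking Lemma~\ref{N2.3} and the $2$-bounded structure $\Omega(z,s)$ to locate the pentagon. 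I would argue that any new neighbor $w$ of $t\in\Gamma_2(x)$ that must be tested for membership arises as the fifth vertex of a pentagon through some $u\in\Delta\cap\Gamma_3(x)$, after which Lemma~\ref{N4.6} closes the argument.

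\textbf{The main obstacle} I anticipate is the bookkeeping at distance~$2$: verifying that the combinatorial configurations forced by weak-geodeticity genuinely produce the pentagon hypothesis of Lemma~\ref{N4.6}, rather than some degenerate or shorter cycle. Concretely, given $t\in\Delta\cap\Gamma_2(x)$ and a neighbor $w$ of $t$, I need to exhibit $u\in\Delta\cap\Gamma_3(x)$ and the intermediate vertices $s$ so that $stuzw$ (with $z:=t$) is an honest pentagon; this requires carefully using $a_2\neq 0$ to find edges inside $\Gamma_2(x)$ (via Lemma~\ref{N2.3}) and the no-parallelogram condition to prevent distance collapses. Once the pentagon is in hand, Lemmas~\ref{N4.5} and~\ref{N4.6} do the rest, and the equivalence in Theorem~\ref{N2.2} then promotes "weak-geodetically closed with respect to $x$" to "weak-geodetically closed of diameter~$3$," completing the proof that $\Delta$ is the required regular subgraph.
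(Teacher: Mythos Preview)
Your overall plan has the right endpoint—Lemma~\ref{N4.6} is indeed what settles the distance-$2$ case—but you have the two halves of the weak-geodetic criterion backwards. For $\Delta=[x,C]$, the inclusion $C(t,x)\subseteq\Delta$ is \emph{immediate} from Definition~\ref{N4.1}: if $t$ lies on a geodesic from $x$ to some $z\in C$, then so does every $w\in C(t,x)$. The entire content of the lemma is the inclusion $A(t,x)\subseteq\Delta$, which you treat as secondary. Relatedly, your claim that $\Gamma_1(x)\subseteq\Delta$ is false (and unneeded): in the proof of Theorem~\ref{main} one shows $\Gamma_1(x)\cap\Delta=C(x,y)\cup A(x,y)$, which has only $a_3+c_3<k$ elements; a vertex $v\in B(x,y)$ cannot lie on a geodesic to any $z\in C$ precisely because $B(x,z)=B(x,y)$ forces $\partial(v,z)=4$.

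For $\partial(x,t)=3$ you invoke Lemma~\ref{N4.2} only to conclude $t\in C$, but that already follows from $t\in[x,C]\cap\Gamma_3(x)$ without any lemma. What Lemma~\ref{N4.2} actually provides is the $A$-condition at this level: for $w\in A(t,x)$ one gets $B(x,w)=B(x,t)=B(x,y)$, hence $w\in C\subseteq\Delta$. In the distance-$2$ case the pentagon construction is more direct than you anticipate and needs neither Lemma~\ref{N2.3} nor any $\Omega(z,s)$: given $z\in\Delta\cap\Gamma_2(x)$ and $w\in A(z,x)$, pick $u\in C$ with $z\in C(u,x)$; then $\partial(w,u)=2$ by $a_1=0$, so choose $s\in A(w,u)$ (using $a_2\neq 0$) and $t\in C(u,s)$, and $stuzw$ is the pentagon required by Lemma~\ref{N4.6}. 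Finally, Theorem~\ref{N2.2} does not belong here: it applies only after regularity of $\Delta$ is established, which is the separate work done in the proof of Theorem~\ref{main}.
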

\begin{proof} Clearly $C(z, x)\subseteq \Delta$ for any $z\in
\Delta$. It suffices to show $A(z, x)\subseteq \Delta$ for any
$z\in\Delta$. Suppose $z\in\Delta$. We discuss case by case in the
following. The case $\partial(x,z)=1$ is trivial since $a_1=0$. For
the case $\partial(x,z)=3$, we have $B(x,y)=B(x,z)=B(x,w)$ for any
$w \in A(z,x)$ by definition of $\Delta$ and Lemma \ref{N4.2}. This
implies $A(z, x)\subseteq \Delta$ by the construction of $\Delta$.
For the remaining case $\partial(x,z)=2$, fix $w \in A(z,x)$ and we
shall prove $w\in\Delta$. There exists $u\in C$ such that $z \in
C(u,x)$. Observe that $\partial(w,u)=2$ since $a_1=0$. Choose $s\in
A(w,u)$ and $t\in C(u,s)$. Then $stuzw$ is a pentagon in $\Gamma$.
The result comes immediately by Lemma \ref{N4.6}.
\end{proof}
\medskip

{\noindent \bf Proof of Theorem \ref{main}:}
\smallskip

By Theorem~\ref{N2.2} and Lemma~\ref{N4.7}, it suffices to show
that  $\Delta$ defined in (\ref{eq4.0}) is regular with valency
$a_3+c_3$. Clearly from the construction and Lemma~\ref{N4.7},
$\vert\Gamma_{1}(z)\cap\Delta\vert= a_{3}+c_{3}$ for any $z\in C$.
First we show $\vert\Gamma_{1}(x)\cap\Delta\vert= a_{3}+c_{3}$.
Note that $y\in\Delta\cap\Gamma_3(x)$ by construction of $\Delta$.
For any $z\in C(x,y)\cup A(x,y)$,
$$\partial(x,z)+\partial(z,y)\leq\partial(x,y)+1.$$
This implies $z\in\Delta$ by definition \ref{N2.1}. Hence
$C(x,y)\cup A(x,y)\subseteq\Delta$. Suppose
$B(x,y)\cap\Delta\neq\emptyset$. Choose $t\in B(x,y)\cap\Delta$.
Then there exists $y'\in\Gamma_3(x)\cap\Delta$ such that $t\in
C(x,y')$. Note that $B(x,y)=B(x,y')$. This leads a contradiction to
$t\in C(x,y')$. Hence $B(x,y)\cap\Delta=\emptyset$ and
$\Gamma_1(x)\cap\Delta=C(x,y)\cup A(x,y)$. Then we have
$\vert\Gamma_{1}(x)\cap\Delta\vert= a_{3}+c_{3}$.
\medskip

Since each vertex in $\Delta$ appears in a sequence of vertices
$x=x_0,x_1,x_2,x_3$ in $\Delta$, where $\partial(x,x_j)=j$ and
$\partial(x_{j-1},x_j)=1$ for $1\leq j\leq 3$, it suffices to show
\begin{equation}\label{eq4.5}
    \vert\Gamma_{1}(x_i)\cap\Delta\vert= a_{3}+c_{3}
\end{equation}
for $1\leq i\leq 2$. For each integer $0\leq i\leq 2$, we show
$$\vert\Gamma_{1}(x_{i})\setminus\Delta\vert\leq\vert\Gamma_{1}(x_{i+1})\setminus\Delta\vert$$
by the $2$-way counting of the number of the pairs ($s,z$) for
$s\in\Gamma_{1}(x_{i})\setminus\Delta$,
$z\in\Gamma_{1}(x_{i+1})\setminus\Delta$ and $\partial(s,z)=2$. For
a fixed $z\in\Gamma_{1}(x_{i+1})\setminus\Delta$, we have
$\partial(x,z)=i+2$ by Lemma~\ref{N4.7}, so $\partial(x_i,z)=2$ and
$s\in A(x_i,z)$. Hence the number of such pairs ($s,z$) is at most
$\vert\Gamma_{1}(x_{i+1})\setminus\Delta\vert a_2$.

On the other hand, we show this number is exactly
$\vert\Gamma_{1}(x_{i})\setminus\Delta\vert a_2$. Fix an
$s\in\Gamma_1(x_i)\setminus\Delta$. Observe $\partial(x,s)=i+1$ by
Lemma \ref{N4.7}. Observe $\partial(x_{i+1},s)=2$ since $a_1=0$.
Pick any $z\in A(x_{i+1},s)$. We shall prove $z\not\in\Delta$.
Suppose $z\in\Delta$ in the below arguments and choose any $w\in
C(s,z)$.
\medskip

\noindent Case 1: $i=0$.

Observe $\partial(x,z)=2$, $\partial(x,s)=1$ and $\partial(x,w)=2$.
This will force $s\in\Delta$ by Lemma \ref{N4.7}, a contradiction.
\medskip

\noindent Case 2: $i=1$.

Observe $\partial(x,z)=3$, otherwise $z\in\Omega(x,x_2)$ and this
implies $s\in\Omega(x,x_2)\subseteq\Delta$ by Lemma \ref{N2.3} and
Lemma \ref{N4.7}, a contradiction. This also implies $s\in\Delta$ by
Lemma \ref{N4.7}, a contradiction.
\medskip

\noindent Case 3: $i=2$.

Observe $\partial(x,z)=2$ or $3$. Suppose $\partial(x,z)=2$. Then
$B(x,x_3)=B(x,s)$ by Lemma \ref{N4.5} (with $x_3=u$, $x_2=t$). Hence
$s\in\Delta$, a contradiction. So $z\in\Gamma_{3}(x)$. Note
$\partial(x,w)\neq 2,3$, otherwise $s\in\Delta$ by Lemma \ref{N4.5}
and Lemma \ref{N4.7} respectively. Hence $\partial(x,w)=4$. Then by
applying $\Omega=\Omega(x_2,w)$ in Theorem \ref{N2.5} we have
$\partial(x_2,z)=1$, a contradiction to $a_1=0$.

From the above counting, we have
\begin{equation}\label{eq4.6}
    \vert\Gamma_{1}(x_{i})\setminus\Delta\vert a_2\leq
\vert\Gamma_{1}(x_{i+1})\setminus\Delta\vert a_2
\end{equation}
for $0\leq i\leq 2$. Eliminating $a_2$ from (\ref{eq4.6}), we find
\begin{equation}\label{eq4.7}
    \vert\Gamma_{1}(x_{i})\setminus\Delta\vert\leq
\vert\Gamma_{1}(x_{i+1})\setminus\Delta\vert,
\end{equation}
or equivalently
\begin{equation}\label{eq4.8}
    \vert\Gamma_{1}(x_{i})\cap\Delta\vert\geq
\vert\Gamma_{1}(x_{i+1})\cap\Delta\vert
\end{equation}
for $0\leq i\leq 2$. We have known previously
$\vert\Gamma_1(x_0)\cap\Delta\vert=\vert\Gamma_1(x_3)\cap\Delta\vert
=a_3+c_3$. Hence (\ref{eq4.5}) follows from (\ref{eq4.8}).
 \hfill $\square$
\bigskip

\begin{rem}
The $4$-bounded property seems to be much harder to be proved. We
expect that the $3$-bounded property is enough to classify all the
distance-regular graphs with classical parameters, $a_1=0$ and
$a_2\not=0$.
\end{rem}
\bigskip


\medskip

\noindent Yeh-jong Pan \hfil\break Department of Applied Mathematics
\hfil\break National Chiao Tung University \hfil\break 1001 Ta Hsueh
Road \hfil\break Hsinchu, Taiwan 300, R.O.C.\hfil\break Email: {\tt
yjp.9222803@nctu.edu.tw} \hfil\break Fax: +886-3-5724679 \hfil\break
\medskip

\noindent Chih-wen Weng \hfil\break Department of Applied
Mathematics \hfil\break National Chiao Tung University \hfil\break
1001 Ta Hsueh Road \hfil\break Hsinchu, Taiwan 300,
R.O.C.\hfil\break Email: {\tt weng@math.nctu.edu.tw} \hfil\break
Fax: +886-3-5724679 \hfil\break

\end{document}